\newtheorem{corollary}{Corollary}[section]
\newtheorem{lemma}[corollary]{Lemma}
\newtheorem{proposition}[corollary]{Proposition}
\newtheorem{remark}[corollary]{Remark}
\newtheorem{theorem}[corollary]{Theorem}
\newcommand{\mylabel}[1]{\label{#1}
            \ifx\undefined\stillediting
            \else \fbox{$#1$}\fi }
\newcommand{\BE}{\begin{equation}}
\newcommand{\BEQ}[1]{\BE\mylabel{#1}}
\newcommand{\EEQ}{\end{equation}}
\newcommand{\rfb}[1]{\mbox{\rm
   (\ref{#1})}\ifx\undefined\stillediting\else:\fbox{$#1$}\fi}
\newfont{\Blackboard}{msbm10 scaled 1200}
\newcommand{\bl}[1]{\mbox{\Blackboard #1}}
\newfont{\roma}{cmr10 scaled 1200}
\def\CC{\rm \hbox{C\kern-.56em\raise.4ex
         \hbox{$\scriptscriptstyle |$}\kern+0.5 em }}
\renewcommand{\cline}{{\bl C}}
\newcommand{\nline}  {{\bl N}}
\newcommand{\rline}  {{\bl R}}
\newcommand{\half}   {{\frac{1}{2}}}
\def\s{\sigma}
\def\cA{{\cal A}}
\def\cH{{\cal H}}
\newcommand{\mm}    {{\hbox{\hskip 0.5pt}}}
\newcommand{\bluff} {{\hbox{\raise 15pt \hbox{\mm}}}}
\newcommand{\FORALL} {{\hbox{$\hskip 11mm \forall \;$}}}
\def\section{\@startsection {section}{1}{\z@}{-3.5ex plus -1ex minus
    -.2ex}{2.3ex plus .2ex}{\large\bf}}
\def\be{\begin{equation}}
\def\ee{\end{equation}}
\def\ds{\displaystyle}
\newcommand{\caD}{{\cal D}}
\newcommand{\caH}{{\cal H}}
\newcommand{\lb}{\beta}
\newcommand{\norm}[2]{\|#1 \| _{#2} }
\def \R {{\mathbb{R}}}
\def \N {{\mathbb{N}}}
\def \C {{\mathbb{C}}}
\begin{document}
\thispagestyle{empty}
\title{\bf Boundary feedback stabilization of a chain of serially connected strings}
\author{
Ka\"{\i}s Ammari
\thanks{UR Analysis and Control of Pde, UR 13ES64, Department of Mathematics, Faculty of Sciences of Monastir, University of Monastir, Tunisia, e-mail: kais.ammari@fsm.rnu.tn} 
 \, and \, Denis Mercier \thanks{UVHC, LAMAV, FR CNRS 2956, F-59313 Valenciennes, France, email: denis.mercier@univ-valenciennes.fr}}
\date{}
\maketitle
%
%
\begin{quotation}
{\bf Abstract.} {\small We consider $N$ strings connected to one another and forming a particular network which is a chain of strings. We study a stabilization problem and precisley we prove that the energy of the solutions of the dissipative system decay exponentially to zero when the time tends to infinity, independently of the densities of the strings. Our technique is based on a
frequency domain method and a special analysis for the resolvent. Moreover, 
by same appraoch, we study the transfert function associated to the chain of strings and the stability of the Schr\"odinger system.}
\end{quotation}
2010 Mathematics Subject Classification. 35L05, 35M10, 35R02, 47A10, 93D15, 93D20.\\
Key words and phrases. Network, wave equation, resolvent method, transfert function, boundary feedback stabilization.
%
%
\section{Introduction} \label{secintro}

\setcounter{equation}{0}
We consider the evolution problem $(P)$  described by the following system of $N$ equations: 
\begin{equation*}
\leqno(P) 
\left \{
\begin{array}{l}
(\partial_t^2 u_{j}- \rho_j \partial_x^2u_{j})(t,x)=0,\, x\in(j,j+1),\, t\in(0,\infty),\, j = 0,...,N-1, \\
\rho_0 \, \partial_x u_0(t,0) = { \bf \partial_t u_0(t,0)},\ u_{N-1}(t,N)=0,\, t\in(0,\infty),\\
u_{j-1}(t,j)=u_{j}(t,j),\, t\in(0,\infty),\, j = 1,...,N-1,\\
- \rho_{j-1} \partial_x u_{j-1}(t,j)+ \rho_j \partial_x u_{j}(t,j)= 0,\, t\in(0,\infty),\, j = 1,...,N-1, \\
u_j(0,x)=u_j^0(x),\ \partial_t u_j(0,x)=u_j^1(x), \,x \in (j,j+1),\,   j=0,...,N-1,
\end{array}
\right.\\
\end{equation*}
 
where $\rho_j > 0, \, \forall \, j=0,...,N-1$.

We can rewrite the system (P) as a first order hyperbolic system, by putting
$$
V_j = \left( 
\begin{array}{ll}
\partial_t u_j \\ \rho_j \, \partial_x u_j
\end{array}
\right), \; \hbox{and} \; V_j^0 = \left( 
\begin{array}{c}
u^1_j \\ \rho_j \, \partial_x u^0_j
\end{array}
\right), \, 0 \leq j \leq N-1,
$$

\begin{equation*}
\leqno(P^\prime) 
\left \{
\begin{array}{l}
(\partial_t V_{j}- B_j  \, \partial_x V_{j})(t,x)=0,\, x\in(j,j+1),\, t\in(0,\infty),\, j = 0,...,N-1, \\
C_0 V_0(t,0) = 0, \, C_{N-1} V_{N-1} (t,N) = 0,\, t\in(0,\infty),\\
V_{j-1}(t,j)=V_{j}(t,j),\, t\in(0,\infty),\, j = 1,...,N-1,\\
V_j(0,x)= V_j^0 (x), \,x\in(j,j+1),\,   j=0,...,N-1,
\end{array}
\right.\\
\end{equation*}
where 
\be
\label{C0CN}
B_j = \left(\begin{array}{ll} 0 & 1 \\ \rho_j & 0\end{array} \right), C_0 = \left(\begin{array}{lclc} 1 & - 1 \\ 0 & 0 \end{array} \right), \, C_{N-1} = \left(\begin{array}{ll} 1 & 0 \\ 0 & 0 \end{array} \right), \, 0 \leq j \leq N -1.
\ee

Models of the transient behavior of some or all of the state variables describing the motion of flexible structures have been of great interest
in recent years, for details about physical motivation for the models, see \cite{dagerzuazua}, \cite{lagnese} and the references therein. Mathematical
analysis of transmission partial differential equations is detailed in \cite{lagnese}.

Let us first introduce some notation and definitions which will be used throughout the rest of the paper, in particular some which are linked to the notion of $C^{\nu }$- networks,
$\nu \in \nline$ (as introduced in \cite{jvb}). \\
Let $\Gamma$ be a connected topological graph embedded in $\rline$, with $N$ edges ($N \in \nline^{*}$).  
Let $K=\{k_{j}\, :\, 0\leq j\leq N-1\}$ be the set of the edges of $\Gamma$. Each edge $k_{j}$ is a Jordan curve in $\rline$ and is assumed to be parametrized by its arc length $x_{j}$ such that
the parametrization $\pi _{j}\, :\, [j,j+1]\rightarrow k_{j}\, :\, x_{j}\mapsto \pi _{j}(x_{j})$ is $\nu$-times differentiable, i.e. $\pi _{j}\in C^{\nu }([j,j+1],\rline)$ for all $0\leq j\leq N-1$. The density of the edge $k_j$ is $\rho_j>0$. 
The $C^{\nu}$- network $R$  associated with $\Gamma$ is then defined as the union $$R=\bigcup _{j=0}^{N-1}k_{j}.$$

We study a feedback stabilization problem for a wave and a Schr\"dinger equations in networks, see \cite{ammari1}-\cite{amjellk}, \cite{lagnese} and Figure \ref{fig}. 
\begin{figure}[ht]
\begin{center}
\setlength{\unitlength}{0.240900pt}
\ifx\plotpoint\undefined\newsavebox{\plotpoint}\fi
\begin{picture}(1500,900)(0,0)
\sbox{\plotpoint}{\rule[-0.200pt]{0.400pt}{0.400pt}}%

\put(120,586){\makebox(0,0){$\rho_0$}}
\put(120,245){\makebox(0,0){string 1}}

\put(320,586){\makebox(0,0){$\rho_1$}}
\put(320,245){\makebox(0,0){string 2}}

\put(540,586){\makebox(0,0){$\rho_2$}}
\put(540,245){\makebox(0,0){string 3}}

\put(820,586){\makebox(0,0){$...$}}
\put(820,245){\makebox(0,0){...}}

\put(1120,586){\makebox(0,0){$\rho_{N-2}$}}
\put(1120,245){\makebox(0,0){string N-1}}

\put(1320,586){\makebox(0,0){$\rho_{N-1}$}}
\put(1320,245){\makebox(0,0){string N}}

\put(20,449){\usebox{\plotpoint}}
\put(20,449){\makebox(0,0){$\bullet$}}
\put(220,449){\makebox(0,0){$\bullet$}}
\put(420,449){\makebox(0,0){$\bullet$}}
\put(620,449){\makebox(0,0){$\bullet$}}
\put(820,449){\makebox(0,0){$\bullet$}}
\put(1020,449){\makebox(0,0){$\bullet$}}
\put(1220,449){\makebox(0,0){$\bullet$}}
\put(1420,449){\makebox(0,0){$\bullet$}}
\put(8.0,449.0){\rule[-0.200pt]{341.837pt}{0.400pt}}
\end{picture}
\caption{Serially connected strings}
\end{center}
\label{fig}
\end{figure}
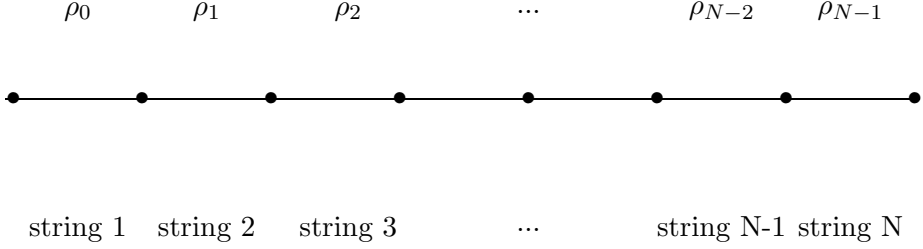

More precisely, we study a linear system modelling the vibrations of a chain of strings. For each edge $k_{j}$, the scalar function $u_j(t,x)$ for $x \in R$ and $t > 0$
contains the information on the vertical displacement of the string, $0 \leq j \leq N-1$. 

Our aim is to study the behaviour of the resolvent of the spatial operator
which is defined in Section \ref{resolvent} and to obtain stability result for $(P)$.

We define the natural energy $E(t)$ of a solution $\underline{u} = (u_0,...,u_{N-1})$ of $(P)$ and the natural energy of a solution $V$ of $(P^\prime)$, respectively, by
\be 
\label{energy1}
E(t)=\frac{1}{2} \ds \sum_{j=0}^{N-1} \left( \int_{j}^{j+1} \left(|\partial_t u_{j}(t,x)|^2+ \rho_j \, |\partial_x u_{j}(t,x)|^2\right){\rm d}x \right),
\ee
\be 
\label{energy1b}
e(t)=\frac{1}{2} \, \ds \sum_{j=0}^{N-1} \left\|V_{j}\right\|^2_{L^2_{\rho_j}(j,+,j+1) \times L^2(j,j+1)}, \, 0 \leq j \leq N-1,
\ee
where $L^2_{\rho_j} (j,j+1) = L^2_{\rho_j} ((j,j+1),dx) = L^2((j,+j+1),\rho_j \, dx)$.

We note that $E(t)  \asymp e(t), \, \forall \, t \geq 0$.

We can easily check that every sufficiently smooth solution of $(P)$ satisfies the following dissipation law 
\begin{equation}\label{dissipae1}
E^\prime(t) = - \ds \bigl|\partial_t u_{0}(t,0)\bigr|^2\leq 0, \, 
e^\prime(t) = - \ds \bigl|C_0 V_{0}(t,0)\bigr|^2\leq 0,
\end{equation}
and therefore, the energy is a nonincreasing function of the time variable $t$.

The result concerns the well-posedness of the solutions of $(P)$ and the exponential decay of the energy $E(t)$ of the solutions of $(P)$. 

\medskip

The main result of this paper then concerns the precise asymptotic behaviour of the solutions of $(P)$. 
Our technique is based on a frequency domain method and a special analysis for the resolvent.

\medskip

This paper is organized as follows:
In Section \ref{wellposed}, we give the proper functional setting for system $(P)$ and prove that the system is well-posed.
In Section \ref{resolvent}, we then show that the energie of system $(P)$ tends to zero.
We study, in Section \ref{resolvent}, the stabilization result for $(P)$ by the frequency domain technique 
and give the explicit decay rate of the energy of the solutions of $(P)$. Finally, in the last sections, 
we study the transfert function associated to a string network and the exponential stability of the Schr\"odinger system.

\section{\label{wellposed}Well-posedness of the system}

In order to study system $(P)$ we need a proper functional setting. 
We define the following spaces  
$$
H = \ds \prod_{j=0}^{N-1} (L^2_{\rho_j}(j,j+1) \times L^2(j,j+1))
$$
and
$$
V= \bigg \{\underline{u}=(u_0,...,u_{N-1}) \in \ds \prod_{j=0}^{N-1} H^1(j,j+1),\\ 
u_{N-1}(N)=0, \, u_{j-1}(j) = u_{j} (j), \, j=1, \ldots ,N-1 \bigg \},
$$
equipped with the inner products
\begin{equation}\label{ipVb}
<V,\tilde{V}>_{H}=\sum_{j=0}^{N-1}\left(\int_j^{j+1} \rho_j \,  u_{j}(x) \,  \overline{\tilde{u}_{j}(x)} + v_{j}(x) \,  \overline{\tilde{v}_{j}(x)} 
dx\right), \, V = \left(\begin{array}{c} \underline{u} \\ \underline{v} \end{array} \right), 
\tilde{V} = \left(\begin{array}{c} \underline{\tilde{u}} \\ \underline{\tilde{v}} \end{array} \right),
\end{equation}
\begin{equation}\label{ipV}
<\underline{u},\,\underline{\tilde{u}}>_{V}=\sum_{j=0}^{N-1}\left(\int_j^{j+1} \rho_j \partial_x u_{j}(x)\partial_x  \overline{\tilde{u}_{j}(x)} 
dx\right).
\end{equation}

It is well-known that system $(P)$ may be rewritten as the first order evolution equation
\begin{equation} \left\{
\begin{array}{l}
U^\prime =\mathcal{A} U,\\
U(0)=(\underline{u}^{0},\,\underline{u}^{1})=U_0,\end{array}\right.\label{pbfirstorder}\end{equation}
where $U$ is the vector $U=(\underbar{u},\,\partial_t \underbar{u})^t$ and the operator $\mathcal{A} : {\cal D}({\cal A}) \rightarrow {\cal H} = V\times \ds \prod_{j=0}^{N-1} L^2(j,j+1)$ is defined by 
\[\mathcal{A} (\underline{u},\underline{v})^t:=(\underline{v}, (\rho_j \partial_x^2u_{j})_{0 \leq j\leq N-1})^t,\] 
with
\begin{multline*}
{\cal D}({\cal A}):=\left\{(\underline{u},\,\underline{v})\in \prod_{j=0}^{N-1} H^2(j,j+1) \times V \,:
\mbox {\textrm{satisfies }} \,(\ref {e2}) \; \mbox{\textrm{to}} \; (\ref {e5}) \; \mbox{\textrm{hereafter}}
\right\},\end{multline*}
\begin{equation}\label{e2}
\rho_0 \, \partial_x u_{0}(0) =  v_{0}(0)
\end{equation}
\begin{equation}\label{e5}
- \rho_j \partial_x u_{j}(j) + \rho_{j-1} \partial_x u_{j-1}(j)= 0, \quad j = 1,...,N-1. 
\end{equation}

It is clear that $\mathcal{H}$ is a Hilbert space,
equipped with the usual inner product 
\begin{multline*}
\left\langle\left(\begin{array}{c}\underline{u}\\\underline{v}\end{array}\right),
\left(\begin{array}{c}\underline{\tilde{u}}\\ \underline{\tilde{v}}\end{array}\right)\right\rangle_{{\cal H}} = 
\sum_{j=0}^{N-1}\left(\int_{j}^{j+1}\left(v_{j}(x)\overline{\tilde{v}_{j}(x)}
+ \rho_j \partial_x u_{j}(x)\partial_x\overline{\tilde{u}_{j}(x)}\right){\rm d}x. \right.
\end{multline*} 

By the same way we define the operator $A$ as following:
$$
A : {\cal D}(A) \subset H \rightarrow H, A V = B \, \partial_x \, V, \, \forall \,  V \in {\cal D}(A),
$$ 
where $${\cal D}(A) = \left\{V = (V_j)_{0 \leq j \leq N-1} \in H, \, V_j \in (H^1(j,j+1))^2, \,  V_{j-1} (j) = V_j (j), 1 \leq j \leq N-1, \right.
$$
$$
C_0 V_0 (0 ) = 0, \, C_{N-1} V_{N-1} (N) = 0  $$ and 
$B = (B_j)_{0 \leq j \leq N-1}.$

Now we can prove the well-posedness of system $(P)$ and that the solution of $(P)$ satisfies the dissipation law (\ref{dissipae1}).

\begin{proposition}\label{3exist1} 
(i) For an initial datum $U_{0}\in \mathcal{H}$, there exists a unique solution $U\in C([0,\,+\infty),\, \mathcal{H})$
to  problem (\ref{pbfirstorder}). Moreover, if $U_{0}\in \mathcal{D}(\mathcal{A})$, then
$$U\in C([0,\,+\infty),\, \mathcal{D}(\mathcal{A}))\cap C^{1}([0,\,+\infty),\, \mathcal{H}).$$

(ii) The solution $\underline{u}$ of $(P)$ with initial datum in $\mathcal{D}(\mathcal{A})$ satisfies \rfb{dissipae1}.
Therefore the energy is decreasing.
\end{proposition}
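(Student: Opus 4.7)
The plan is to apply the Lumer--Phillips theorem to the operator $\mathcal{A}$: once one shows that $\mathcal{A}$ is $m$-dissipative on $\mathcal{H}$, part (i) is immediate from standard semigroup theory, and the dissipation identity in part (ii) drops out of the very computation that establishes dissipativity.

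First I would verify dissipativity. For $(\underline{u},\underline{v})^t \in \mathcal{D}(\mathcal{A})$, compute
\begin{equation*}
\left\langle \mathcal{A}\begin{pmatrix}\underline{u}\\ \underline{v}\end{pmatrix},\begin{pmatrix}\underline{u}\\ \underline{v}\end{pmatrix}\right\rangle_{\mathcal{H}}
=\sum_{j=0}^{N-1}\int_j^{j+1}\bigl(\rho_j\partial_x^2 u_j\,\overline{v_j}+\rho_j\partial_x v_j\,\overline{\partial_x u_j}\bigr)\,dx,
\end{equation*}
and integrate by parts on the first term. Taking the real part, the interior integrals cancel and one is left with boundary contributions at the nodes $x=0,1,\ldots,N$. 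At each interior node $j=1,\ldots,N-1$, continuity $v_{j-1}(j)=v_j(j)$ combined with the Kirchhoff-type relation \eqref{e5} makes the contribution vanish; at $x=N$ the Dirichlet condition gives $v_{N-1}(N)=0$; at $x=0$ the transmission relation \eqref{e2} yields the single remaining term $-\rho_0\partial_x u_0(0)\,\overline{v_0(0)}=-|v_0(0)|^2$. Hence
\begin{equation*}
\mathrm{Re}\,\langle\mathcal{A} U,U\rangle_{\mathcal{H}}=-|v_0(0)|^2\leq 0,
\end{equation*}
which simultaneously establishes dissipativity and, applied to a classical solution, gives $E'(t)=-|\partial_t u_0(t,0)|^2$, i.e.\ \eqref{dissipae1}.

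Next I would prove that $I-\mathcal{A}$ is surjective. Given $(\underline{f},\underline{g})\in\mathcal{H}$, the equation $(I-\mathcal{A})(\underline{u},\underline{v})^t=(\underline{f},\underline{g})^t$ reduces to $\underline{v}=\underline{u}-\underline{f}$ together with the elliptic transmission problem $u_j-\rho_j\partial_x^2 u_j=f_j+g_j$ on $(j,j+1)$, subject to $u_{N-1}(N)=0$, the continuity conditions $u_{j-1}(j)=u_j(j)$, the Kirchhoff condition $\rho_{j-1}\partial_x u_{j-1}(j)=\rho_j\partial_x u_j(j)$, and the Robin-type relation $\rho_0\partial_x u_0(0)=u_0(0)-f_0(0)$ at the controlled end (note $f_0(0)$ is well defined because $\underline{f}\in V\hookrightarrow\prod C([j,j+1])$). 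I would set this up variationally on $V$ with
\begin{equation*}
a(u,\phi)=\sum_{j=0}^{N-1}\int_j^{j+1}\bigl(u_j\overline{\phi_j}+\rho_j\partial_x u_j\,\overline{\partial_x\phi_j}\bigr)\,dx+u_0(0)\overline{\phi_0(0)},
\end{equation*}
which is continuous and coercive on $V$ (the Dirichlet condition at $N$ gives a Poincar\'e inequality on the network), and apply Lax--Milgram. The unique weak solution is then shown to belong to $\prod H^2(j,j+1)$ and to satisfy all the transmission/boundary relations by testing against suitable $\phi\in V$ and standard elliptic regularity on each edge, so $(\underline{u},\underline{v})\in\mathcal{D}(\mathcal{A})$.

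Combining dissipativity with surjectivity of $I-\mathcal{A}$, the Lumer--Phillips theorem shows that $\mathcal{A}$ generates a $C_0$-semigroup of contractions on $\mathcal{H}$, which yields the two regularity statements in (i). The main conceptual point, and the step requiring the most care, is the interior-node algebra: one has to see clearly how the two transmission conditions (continuity of $u$ and the Kirchhoff balance of $\rho\,\partial_x u$) pair up in the boundary terms so that only the feedback node at $x=0$ survives, both in the dissipation computation and in the verification that the Lax--Milgram solution really sits in $\mathcal{D}(\mathcal{A})$.
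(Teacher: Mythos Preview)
Your proposal is correct and follows essentially the same route as the paper: Lumer--Phillips, with dissipativity obtained from the same integration-by-parts computation (giving $\Re\langle\mathcal{A}U,U\rangle_{\mathcal{H}}=-|v_0(0)|^2$) and maximality via Lax--Milgram applied to the variational formulation of the reduced second-order transmission problem on $V$, after which $H^2$-regularity and the boundary/transmission conditions are recovered by testing. The only cosmetic difference is that the paper works with a general $\lambda>0$ rather than $\lambda=1$, and moves the $f_0(0)$ term to the right-hand side of the variational identity; part (ii) is handled identically.
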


\begin{proof}
(i) By Lumer-Phillips' theorem (see \cite{Pazy, tucsnakbook}), it suffices to show that
$\mathcal{A}$ is dissipative and  maximal.

We first prove that $\mathcal{A}$ is dissipative. Take $U=(\underline{u},\underline{v})^{t}\in \mathcal{D}(\mathcal{A})$. 
Then
\begin{multline*}
\left\langle\mathcal{A}U,\, U \right\rangle_{\mathcal{H}}=\sum_{j=0}^{N-1}\left(\int_{j}^{j+1}\left(\rho_j \partial_x^2u_{j}(x)\overline{v_{j}(x)}
+ \rho_j \partial_x v_{j}(x)\partial_x \overline{u_{j}(x)}\right){\rm d}x\right ).
\end{multline*}
By integration by partsand by using the transmission and boundary conditions, we have

\begin{equation}\label{dissipativeness}
\Re\left(\left\langle\mathcal{A}U,\, U \right\rangle_{\mathcal{H}}\right)=- \left|v_{0}(0)\right|^2\leq 0.
\end{equation}
This shows the dissipativeness of $\mathcal{A}$.

\medskip

Let us now prove that $\mathcal{A}$ is maximal, i.e. that
$\lambda I-\mathcal{A}$ is surjective for some $\lambda>0$.

Let $(\underline{f}, \underline{g})^{t}\in \mathcal{H}$. We look for $U=(\underline{u}, \underline{v})^{t}\in \mathcal{D}(\mathcal{A})$ solution of 
\begin{equation}\label{eqmaxmon}
(\lambda I-\mathcal{A})\left(\begin{array}{c}
\underline{u}\\\underline{v}\end{array}\right)=\left(\begin{array}{c}
\underline{f}\\\underline{g}\end{array}\right),\end{equation} 
or equivalently
\begin{equation} \left\{
\begin{array}{ll}
\lambda u_{j}-v_{j}=f_{j} & \forall j\in\{0,...,N-1\},\\
\lambda v_{j}- \rho_j \partial^{2}_xu_{j}=g_{j} & \forall j\in\{0,...,N-1\}.\end{array}\right.\label{eqmaxmon2}\end{equation}

Suppose that we have found $\underline{u}$ with the appropriate regularity. 
Then for all  $j\in\{0,...,N-1\},$ we have
\begin{equation} v_{j}:=\lambda u_{j}-f_{j}\in V.\label{maxmonv}\end{equation}
It remains to find $\underline{u}$. By (\ref{eqmaxmon2}) and (\ref{maxmonv}), $u_{j}$ must
satisfy, for all $j=0,...,N-1$,
$$ 
\lambda^{2}u_{j}- \rho_j \partial^{2}_xu_{j}=g_{j}+\lambda f_{j}.
$$ 
Multiplying these identities by a test function $\underline{\phi}$, integrating in space and using integration by
parts, we obtain
\begin{multline*}
\sum_{j=0}^{N-1}\int_j^{j+1}
\left(\lambda^2u_{j}\overline{\phi_{j}}+ \rho_j \partial_x u_{j}\partial_x \overline{\phi_{j}}\right)dx
-\sum_{j=0}^{N-1}\left[\rho_j \partial_xu_{j}\overline{\phi_{j}}\right]_j^{j+1}  \\
=\sum_{j=0}^{N-1}\int_j^{j+1}\left( g_j+\lambda f_j \right)\overline{\phi_j} \, dx.
\end{multline*}
Since $(\underline{u},\underline{v})\in \mathcal{D}(\mathcal{A})$ and $(\underline{u},\underline{v})$ satisfies (\ref{maxmonv}), we then have
\begin{multline}\label{maxmoneq1}
\sum_{j=0}^{N-1}\int_j^{j+1}\left(\lambda^2u_{j}\overline{\phi_{j}}+\rho_j \partial_xu_{j}\partial_x\overline{\phi_{j}}\right)dx + 
\\
\left(\lambda u_0(0) - f_0(0) \right) \overline{\phi_0}(0) =\sum_{j=0}^{N-1}\int_j^{j+1}\left(g_j+\lambda f_j\right)\overline{\phi_j}dx.
\end{multline}
This problem has a unique solution $\underline{u}\in V$ by Lax-Milgram's lemma, because the left-hand side  of (\ref{maxmoneq1}) is coercive on $V$.
If we consider $\underline{\phi}\in \ds \prod_{j=0}^{N-1}\mathcal{D}(j,j+1)\subset V$, then $\underline{u}$
satisfies
$$\begin{array}{c}
\displaystyle{\lambda^{2}u_{j}-\rho_j \partial_x^{2}u_{j}=g_{j}+\lambda f_{j} \quad\hbox{ in } \mathcal{D}^\prime (j,j+1),\quad  j=0,\cdots,N-1.} 
\end{array}$$
This directly implies that  $\underline{u}\in \ds \prod_{j=0}^{N-1}H^{2}(j,j+1)$ and then 
$\underline{u}\in V\cap \ds \prod_{j=0}^{N-1}H^{2}(j,j+1)$. 
Coming back to (\ref{maxmoneq1}) and by integrating by parts, we find
$$\begin{array}{ll}
- \ds \sum_{j=0}^{N-1}\left(\rho_j \partial_x u_{j}(j) \overline{\phi_{j}}(j) - \rho_{j} \partial_x u_{j}(j+1)\overline{\phi_{j}}(j+1) \right)\\
+ \left(\lambda u_0(0) - f_0(0) \right)\overline{\phi_0}(0)
= 0.
\end{array}
$$
Consequently, by taking particular test functions  $\underline{\phi}$, we obtain 
$$\begin{array}{c}
\rho_0 \partial_x u_{0}(0)= v_0(0) \quad \hbox{and}\quad \rho_j \partial_x u_{j}(j) - \rho_{j-1} \partial_x u_{j-1}(j) =0, \, j=1,\cdots,N-1.
\end{array}$$ 
In summary we have found $(\underline{u},\underline{v})^{t}\in \mathcal{D}(\mathcal{A})$ satisfying (\ref{eqmaxmon}), which finishes the proof of (i).

\medskip

(ii) To prove (ii), it suffices to derivate the energy (\ref{energy1}) for regular solutions and to use system $(P)$.
The calculations are analogous to those of the proof of the dissipativeness of $\mathcal{A}$ in (i), and then, are left to the reader.
\end{proof}

\begin{remark} \label{opha}
By the same we can prove that the operator $A$ is a m-dissipatif operator of $H$ and generates a $C_0-$ semigroup of contractions of $H$.
\end{remark}

\bigskip   

%
%
 
\section{Exponential stability} \label{resolvent}

We prove a decay result of the energy of system $(P)$, independently of $N$ and of the densities, for all initial data in the energy space. Our technique is based on a frequency domain method and a special analysis for the resolvent. 
\begin{theorem} \label{lr}
There exists a constant $C, \omega >0$ such that, for all $(\underline{u}^0,\underline{u}^1)\in {\cal H}$, the solution of system $(P)$ satisfies the following estimate
\BEQ{EXPDECEXP3nb}
E(t) \le C \, e^{- \omega \,t} \, \left\Vert (\underline{u}^0,\underline{u}^1) \right\Vert_{{\cal H}}^2,
\FORALL t > 0.
\EEQ
\end{theorem}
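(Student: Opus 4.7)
The plan is to invoke the Huang--Pr\"uss frequency-domain characterization of exponential stability for contraction $C_0$-semigroups on a Hilbert space: since $\mathcal{A}$ generates such a semigroup by Proposition \ref{3exist1}, the estimate \rfb{EXPDECEXP3nb} is equivalent to the two resolvent conditions
(a) $i\R \subset \rho(\mathcal{A})$, and
(b) $\sup_{\beta \in \R}\, \|(i\beta I - \mathcal{A})^{-1}\|_{\mathcal{L}(\mathcal{H})} < +\infty$.
The whole argument reduces to verifying these two conditions.

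For (a), I would first observe that $\mathcal{A}$ has compact resolvent, since $\mathcal{D}(\mathcal{A})$ embeds compactly into $\mathcal{H}$ by the Rellich--Kondrachov theorem applied on each edge; hence $\sigma(\mathcal{A})$ is purely discrete. To exclude purely imaginary eigenvalues, suppose $\mathcal{A} U = i\beta U$ with $U = (\underline{u},\underline{v})^t \in \mathcal{D}(\mathcal{A})$. Taking the real part of $\langle \mathcal{A} U, U\rangle_{\mathcal{H}}$ and using \rfb{dissipativeness} gives $v_0(0) = 0$; combined with \rfb{e2} this yields $\partial_x u_0(0) = 0$. The eigenvalue equation reduces on each edge to $\rho_j \partial_x^2 u_j + \beta^2 u_j = 0$ with $v_j = i\beta u_j$. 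When $\beta \ne 0$, the relation $v_0(0) = 0$ forces $u_0(0) = 0$, and together with $\partial_x u_0(0) = 0$ this Cauchy problem admits only the trivial solution, so $u_0 \equiv 0$. Continuity $u_0(1) = u_1(1)$ and the transmission condition \rfb{e5} then transport vanishing Cauchy data to the next edge, and induction yields $u_j \equiv 0$ for all $j$. The case $\beta = 0$ is elementary, using the Dirichlet condition $u_{N-1}(N) = 0$.

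For (b), I would argue by contradiction. If (b) fails, there exist $\beta_n \in \R$ and $U_n = (\underline{u}_n,\underline{v}_n)^t \in \mathcal{D}(\mathcal{A})$ with $\|U_n\|_{\mathcal{H}} = 1$ and $F_n = (i\beta_n I - \mathcal{A}) U_n \to 0$ in $\mathcal{H}$. By (a) and compactness of the resolvent one may assume $|\beta_n| \to +\infty$. Pairing $F_n$ with $U_n$ and taking real parts gives
$$
|v_{n,0}(0)|^2 \;=\; \Re \langle F_n, U_n\rangle_{\mathcal{H}} \;\longrightarrow\; 0,
$$
so $v_{n,0}(0) \to 0$ and, via \rfb{e2}, $\rho_0 \partial_x u_{n,0}(0) \to 0$. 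Writing the resolvent equation componentwise yields $i\beta_n u_{n,j} - v_{n,j} = f_{n,j}^1$ and $i\beta_n v_{n,j} - \rho_j \partial_x^2 u_{n,j} = f_{n,j}^2$, so $\rho_j \partial_x^2 u_{n,j} + \beta_n^2 u_{n,j}$ is small in $L^2$. A Duhamel representation of $u_{n,0}$ on $(0,1)$ with this small source and small Cauchy data at $x = 0$ then shows that $u_{n,0}$ and $\partial_x u_{n,0}$ are small, and in particular their traces at $x = 1$ tend to $0$. Continuity of $\underline{u}_n$ and \rfb{e5} transfer this smallness to the Cauchy data of $u_{n,1}$ at $x = 1$, and the argument iterates through the $N$ edges up to the clamped end, contradicting $\|U_n\|_{\mathcal{H}} = 1$.

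The main obstacle is to execute the propagation step with constants independent of $N$ and of the densities $\rho_j$, as claimed in Theorem \ref{lr}. A direct induction through the graph tends to accumulate per-edge multiplicative factors that can depend on $\rho_j$ and grow with the frequency $\beta_n$. To achieve density independence it is natural to pass to the first-order diagonal form $(P^\prime)$ and to track, along each edge, the Riemann invariants diagonalizing $B_j$ from \rfb{C0CN}: these invariants are transported along characteristics at speeds $\pm\sqrt{\rho_j}$, and the transmission conditions $V_{j-1}(j) = V_j(j)$ yield a scattering relation whose matrix norm is controllable uniformly in $\rho_j$. This density-independent resolvent analysis is where I expect the substance of the proof to lie.
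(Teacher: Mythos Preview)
Your reduction to the Huang--Pr\"uss criterion and your treatment of condition (a) coincide with the paper's Lemma \ref{condsp}. The divergence is entirely in part (b).

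For the uniform resolvent bound the paper does \emph{not} argue by contradiction. It works with the first-order operator $A$ associated to $(P')$, writes the solution of $(i\beta - A)W=G$ explicitly on each edge via the propagators $e^{i\beta(x-j)B_j^{-1}}$ (which are uniformly bounded in $\beta$, cf.~\rfb{expbj}), and reduces the problem to solving a $2\times 2$ system $H_{N-1}F_0=Y_{N-1}$ for the Cauchy data $F_0$. The substantive step is a uniform lower bound $|\det H_{N-1}|\geq \gamma_{N-1}>0$ for all $\beta\in\R$. This is obtained not by a Riemann-invariant scattering picture, but by introducing an auxiliary determinant $\tilde D_{N-1}$ and proving the exact invariant $\Re(D_{N-1}\overline{\tilde D_{N-1}})=1$ for every $N$; a $2\times2$ quadratic-form argument then converts this into the desired lower bound. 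This is the ``missing key lemma'' you were anticipating.

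Your contradiction/Duhamel route is a legitimate alternative and, with the integration-by-parts needed to absorb the $i\beta_n f^1_{n,j}$ part of the source (using $f^1_n\to0$ in $V$, not merely in $L^2$), it can be pushed through edge by edge. What is inaccurate is your reading of the phrase ``independently of the densities'': the paper means that exponential stability holds for \emph{every} choice of $N$ and $\rho_0,\dots,\rho_{N-1}$ with no arithmetic or irrationality hypotheses, not that $C,\omega$ are uniform in these parameters. Indeed the paper's own bound $\gamma_{N-1}$ depends on all the $\rho_j$. So the ``main obstacle'' you flag is not an obstacle at all; once you drop the spurious uniformity requirement, your Duhamel propagation (or, more efficiently, the same computation done directly on the first-order system, where the propagators are bounded uniformly in $\beta$) closes without difficulty.
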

  
{\it Proof.}
By classical result (see Huang \cite{huang} and Pr\"{u}ss \cite{pruss}) it suffices to show that ${\cal A}$ satisfies the
following two conditions:) of a $C_0$ semigroup
of contractions on a Hilbert space:

\be 
\rho ({\cal A})\supset \bigr\{i \beta \bigm|\beta \in \rline \bigr\} \equiv i \rline, \label{1.8w} \ee 
and \be \limsup_{|\beta |\to \infty }  \|(i\beta -{\cal A})^{-1}\|_{{\cal L}(\caH)} <\infty, 
\label{1.9} 
\ee 
where $\rho({\cal A})$ denotes the resolvent set of the operator ${\cal A}$.

Then the proof of Theorem \ref{lr} is based on the following two lemmas.

\begin{lemma} \label{condsp}
The spectrum of ${\cal A}$ contains no point on the imaginary axis.
\end{lemma}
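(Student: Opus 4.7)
The plan is to show that no purely imaginary number $i\beta$ can be an eigenvalue of $\mathcal{A}$, and then invoke the compactness of the resolvent (which holds because $\mathcal{D}(\mathcal{A}) \hookrightarrow \prod H^2 \times \prod H^1$ embeds compactly into $\mathcal{H} = V \times \prod L^2$ on the bounded intervals) to conclude that the imaginary axis belongs to $\rho(\mathcal{A})$. So I first note that $0 \in \rho(\mathcal{A})$ is already essentially contained in the maximality part of Proposition 3.1, and then reduce the whole lemma to proving injectivity of $i\beta I - \mathcal{A}$ for every $\beta \in \mathbb{R}$.

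The core step is the following. Take $U = (\underline{u}, \underline{v})^t \in \mathcal{D}(\mathcal{A})$ satisfying $\mathcal{A} U = i\beta U$. Taking the real part of the inner product with $U$, the dissipation identity \rfb{dissipativeness} gives
\be
0 = \Re(i\beta \|U\|_{\mathcal{H}}^2) = \Re\langle \mathcal{A}U, U\rangle_{\mathcal{H}} = -|v_0(0)|^2,
\ee
hence $v_0(0) = 0$. Combining this with the domain condition $\rho_0 \partial_x u_0(0) = v_0(0)$ we also get $\partial_x u_0(0) = 0$. When $\beta \neq 0$, the relation $v_0 = i\beta u_0$ evaluated at $x=0$ yields $u_0(0) = 0$ as well.

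Next I would propagate these vanishing Cauchy data along the chain. On each interval $(j,j+1)$ the equation $\mathcal{A}U = i\beta U$ reduces to the second-order ODE
\be
\rho_j \partial_x^2 u_j + \beta^2 u_j = 0 \quad \text{on } (j,j+1),
\ee
with $v_j = i\beta u_j$. On the first edge we have the Cauchy data $u_0(0) = \partial_x u_0(0) = 0$, so uniqueness for the ODE gives $u_0 \equiv 0$ on $(0,1)$; in particular $u_0(1) = 0$ and $\partial_x u_0(1) = 0$. The continuity condition $u_0(1) = u_1(1)$ and the flux condition $\rho_1 \partial_x u_1(1) = \rho_0 \partial_x u_0(1) = 0$ then supply zero Cauchy data for $u_1$ at $x=1$, and a straightforward induction over $j = 0, \ldots, N-1$ gives $u_j \equiv 0$ on each $(j, j+1)$. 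Hence $U = 0$.

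The case $\beta = 0$ has to be treated separately, since then $v_0(0) = 0$ does not immediately force $u_0(0) = 0$. Here $\mathcal{A}U = 0$ gives $\underline{v} = 0$ and $\partial_x^2 u_j = 0$, so each $u_j$ is affine. From $\partial_x u_0(0) = v_0(0)/\rho_0 = 0$ we deduce $u_0$ is constant; the flux condition at each interior node then forces $\partial_x u_j \equiv 0$ inductively, so every $u_j$ is equal to the same constant, and the Dirichlet condition $u_{N-1}(N) = 0$ finally gives $U = 0$. The argument is essentially mechanical; the only mildly delicate point is being careful with the two cases $\beta \neq 0$ and $\beta = 0$, which I expect to be the sole place one could slip up.
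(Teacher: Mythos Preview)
Your proof is correct and follows the same strategy as the paper's: reduce to eigenvalues via compact resolvent, use the dissipation identity to obtain $v_0(0)=0$ (hence $\partial_x u_0(0)=0$), and then exploit the resulting over-determined second-order ODE system on the chain. In fact you supply more detail than the paper does---you carry out explicitly the induction propagating the vanishing Cauchy data from edge to edge and you treat the case $\beta=0$ separately---whereas the paper simply asserts that the ODE system ``has only trivial solution'' and restricts attention to $\beta\neq 0$.
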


\begin{proof}
Since ${\cal A}$ has compact resolvent,
its spectrum $\s({\cal A})$ only consists of eigenvalues of ${\cal A}$. We will
show that the equation
\be 
{\cal A} Z = i \beta \, Z
\label{1.10}
\ee
with $Z= \left(\begin{array}{l} \underline{y} \cr \underline{v} \end{array} \right) \in {\cal D}({\cal A})$ and $\beta \neq 0$ has only the trivial solution.

By taking the
inner product of (\ref{1.10}) with $Z \in {\cal H}$ and using
\be
\label{1.7}
\Re <{\cal A}Z,Z>_{{\cal H}} = - \, \left| v_0 (0)\right|^2,
\ee
we obtain
that $v_0(0)=0$. Next, we eliminate $\underline{v}$ in (\ref{1.10})
to get
a second order ordinary differential system:
\be
\left\{ \begin{array}{l} 
\rho_j \, \frac{d^2 y_j}{dx^2} + \beta^2 \, y_j = 0, \, (j,j+1), \, j = 0,...,N-1,\\
y_0(0) = \frac{d y_0}{dx}(0)=0, \, y_{N-1} (N) = 0, \\
y_{j-1}(j) = y_{j}(j),\, j = 1,...,N-1.
\end{array} 
\right.
\label{1.11}
\ee
The above system has only trivial solution.

\end{proof}

\begin{lemma}\label{lemresolvent}
The resolvent operator of $\mathcal{A}$ satisfies condition \rfb{1.9}.
\end{lemma}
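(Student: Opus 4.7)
The argument proceeds by contradiction. If \rfb{1.9} fails, then there exist sequences $\beta_n \in \mathbb{R}$ with $|\beta_n| \to \infty$ and $Z_n = (\underline{u}^n, \underline{v}^n)^t \in \mathcal{D}(\mathcal{A})$ satisfying $\|Z_n\|_{\mathcal{H}} = 1$ and $F_n := (i\beta_n I - \mathcal{A}) Z_n \to 0$ in $\mathcal{H}$; write $F_n = (\underline{f}^n, \underline{g}^n)^t$. The dissipation identity
\[
\Re \langle F_n, Z_n\rangle_{\mathcal{H}} = \bigl|v_0^n(0)\bigr|^2
\]
immediately yields $v_0^n(0) \to 0$. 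Since $\rho_0 \partial_x u_0^n(0) = v_0^n(0)$, I get $\partial_x u_0^n(0) \to 0$. Moreover $v_j^n = i\beta_n u_j^n - f_j^n$, so $i\beta_n u_0^n(0) = v_0^n(0) + f_0^n(0)$; using $f_0^n(0) \to 0$ (continuous embedding $H^1 \hookrightarrow C^0$) this forces $\beta_n u_0^n(0) \to 0$, hence $u_0^n(0) \to 0$ as $|\beta_n|\to\infty$.

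The core idea is then to solve the spatial ODE on each string explicitly. Eliminating $v_j^n$ gives
\[
\rho_j \partial_x^2 u_j^n + \beta_n^2 u_j^n = -(g_j^n + i\beta_n f_j^n) =: h_j^n \quad \text{on } (j, j+1),
\]
and with $\mu_n^j := \beta_n/\sqrt{\rho_j}$ the variation of parameters formula reads
\[
u_j^n(x) = u_j^n(j)\cos\bigl(\mu_n^j(x-j)\bigr) + \frac{\partial_x u_j^n(j)}{\mu_n^j}\sin\bigl(\mu_n^j(x-j)\bigr) + \frac{1}{\mu_n^j \rho_j}\int_j^x \sin\bigl(\mu_n^j (x-s)\bigr) h_j^n(s)\, ds,
\]
together with the analogous formula for $\partial_x u_j^n$. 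The delicate piece is the oscillatory integral against $i\beta_n f_j^n$, because $|\beta_n|\|f_j^n\|_{L^2}$ is not a priori bounded. The remedy is a single integration by parts in $s$: this transfers the derivative onto $f_j^n$ (whose $H^1$-norm tends to zero with $\|F_n\|_{\mathcal{H}}$) and gains an additional factor $1/\mu_n^j$, so that, using $\beta_n/(\mu_n^j)^2 = \rho_j/\beta_n$, the $i\beta_n f_j^n$ contribution becomes $O(1/|\beta_n|)\cdot o(1)$ uniformly in $x$. The $g_j^n$ part is directly controlled by $\|g_j^n\|_{L^2}/|\mu_n^j|\to 0$.

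It remains to propagate and conclude. Applied on string $0$, where the endpoint data $u_0^n(0)$ and $\partial_x u_0^n(0)$ are already known to vanish, the formulas above yield $\|u_0^n\|_{H^1(0,1)}\to 0$ and in particular $u_0^n(1), \partial_x u_0^n(1)\to 0$. The transmission conditions $u_j^n(j{+}1)=u_{j+1}^n(j{+}1)$ and $\rho_{j+1}\partial_x u_{j+1}^n(j{+}1)=\rho_j \partial_x u_j^n(j{+}1)$ pass this smallness to the left endpoint of the next string, and the same ODE analysis --- whose constants are uniform in $j$ and independent of the densities --- gives $\|u_{j+1}^n\|_{H^1}\to 0$. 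After $N$ inductive steps, $\|\underline{u}^n\|_V\to 0$; the sharper pointwise bound $\|u_j^n\|_{L^\infty}=o(1/|\beta_n|)$ coming from the same representation then implies $\|v_j^n\|_{L^2}=\|i\beta_n u_j^n - f_j^n\|_{L^2}\to 0$, whence $\|Z_n\|_{\mathcal{H}}\to 0$, contradicting $\|Z_n\|_{\mathcal{H}}=1$. I expect the main obstacle to be the handling of the large forcing $i\beta_n f_j^n$: the oscillation of the kernel $\sin(\mu_n^j(x-s))$ must be exploited through integration by parts to recover a full factor of $1/|\beta_n|$, and this same mechanism is precisely what makes the propagation through arbitrarily many strings robust, giving an estimate that is uniform in $N$ and in the densities $\rho_j$.
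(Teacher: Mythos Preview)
Your contradiction argument is correct and constitutes a genuinely different proof from the one in the paper. The paper does not argue by contradiction at all: it passes to the equivalent first-order system $(P')$ with generator $A$, writes the resolvent equation $(i\beta - B\partial_x)W=G$ explicitly via the matrix exponentials $e^{i\beta x B_j^{-1}}$, and reduces the boundary/transmission conditions to a $2\times 2$ linear system $H_{N-1}F_0=Y_{N-1}$. The heart of the paper's proof is an algebraic lower bound $|\det H_{N-1}|\ge \gamma_{N-1}>0$, obtained by an induction on $N$ exploiting the invariant $\Re\bigl(D_{N-1}\overline{\tilde D_{N-1}}\bigr)=1$; once this is in hand, the resolvent bound follows directly. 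Your route---second-order variation of parameters, integration by parts to tame the large forcing $i\beta_n f_j^n$, and left-to-right propagation through the nodes---is more in the spirit of multiplier/compactness-uniqueness arguments and avoids the determinant algebra entirely. The paper's computation, on the other hand, yields an explicit resolvent estimate and sets up exactly the matrix machinery that is reused verbatim in the later sections on the transfer function and the Schr\"odinger system.

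One point of care in your write-up: the quantity you must carry through the induction is not merely $u_j^n(j)\to 0$ and $\partial_x u_j^n(j)\to 0$, but the sharper pair $\beta_n u_j^n(j)\to 0$ and $\partial_x u_j^n(j)\to 0$; without the extra factor of $\beta_n$ on the Dirichlet trace, the term $-\mu_n^j\,u_j^n(j)\sin(\mu_n^j(x-j))$ in the formula for $\partial_x u_j^n$ is not controlled. You do obtain this at $j=0$ (since $\beta_n u_0^n(0)\to 0$), and the same representation shows it propagates, but your sentence ``in particular $u_0^n(1),\partial_x u_0^n(1)\to 0$'' understates what is actually needed and what you actually prove. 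Also, the parenthetical claim that the constants are ``independent of the densities'' is not accurate---the factors $\sqrt{\rho_j}$ and $\rho_j/\rho_{j+1}$ appear at every step---but since the $\rho_j$ are fixed this does not affect the lemma.
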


\begin{proof}
In order to prove \rfb{1.9} or by equivalence the following
\be 
\limsup_{|\beta |\to \infty }  \|(i\beta -A)^{-1}\|_{{\cal L}(H)} <\infty, 
\label{1.9n} 
\ee 
we  will compute and estimate the resolvent of the operator $A$ associated to the problem $(P^\prime)$. \\
More precisely, let $\lambda=i\lb,\lb \in \R,$  $G=(G_0,...,G_{N-1}) \in H,$ we look for $W=(W_0,...,W_{N-1})\in \caD(A)$  solution of
\be \label{respb} (i \lb - B \partial_x ) W =G, \ee
where $B=(B_0,...,B_{N-1})$ and $$B_j=\left(\begin{array}{ll}
0 & 1  \\ 
\rho_j & 0 
\end{array} \right), \;j=0,...N-1.
$$
We want to prove that there exists a constant $C$ independent of $\lb$ such that 
\be \label{estresolv} \norm{W}{H} \leq C \norm{G}{H}\ee

{\bf First step :} Computation of the resolvent\\
From (\ref{respb}) we have 
$$\partial_x W_j = i\lb B_j^{-1} W_i - B_j^{-1}G_j,j=0,...,N-1$$
therefore 

\be\label{W0} W_0(x)=e^{i \lb (x-1) B_0^{-1} }F_0-\int_1^x e^{i \lb (x-s) B_0^{-1} } B_0^{-1} G_0(s)\; ds, \forall x\in [0,1],\ee
and
\be \label{Wj} W_j(x)=e^{i \lb (x-j) B_j^{-1} }F_j-\int_j^x e^{i \lb (x-s) B_j^{-1} } B_j^{-1} G_j(s) \; ds, \forall x\in [j,j+1],j=1,2,...,N-1,\ee
where $F_0=W_0(1)$ and $F_j=W_j(j), j=0,...,N-1.$
For simplification we set 
\be\label{Gjt} \tilde{G_j} (x)=\int_j^x e^{i \lb(x-s)  B_j^{-1} } B_j^{-1} G_j(s) \; ds ,j=0,...,N-1.\ee
Using the transmission conditions at nodes $j=1,...,N-1$ we have 
\be\label{F1} F_1=F_0, \mbox{ and } F_j=W_{j-1}(j), j=2,...,N-1,\ee
which implies that 
\be\label{Fj0} F_j=\left( \prod_{k=j-1}^{k=1} e^{i \lb B_k^{-1}}\right)F_0- \left(\sum_{p=2}^{j-1}\left( \prod_{k=j-1}^{k=p} e^{i \lb B_k^{-1}}\right) 
\tilde{G}_{p-1}(p)
+ \tilde{G}_{j-1}(j)\right) ,j=2,...,N-1.\ee
For all $j=1,...,N-1$ we set $$M_j(\lb)=\left( \prod_{k=j-1}^{k=1} e^{i \lb B_k^{-1}}\right)$$ and 
\be \label{gammaj}\Gamma_j(\lb)=\sum_{p=2}^{j-1}\left( \prod_{k=j-1}^{k=p} e^{i \lb B_k^{-1}}\right) \tilde{G}_{p-1}(p)
 +\tilde{G}_{j-1}(j),\ee
hence  \be \label{Fj} F_j=M_j(\lb) F_0-\Gamma_j(\lb).\ee
Note that the solution $W$ is completely determined if $F_0$ is known. Indeed, it suffices to insert the identity (\ref{Fj}) in  (\ref{Wj}). 

Thus, we give the equation satisfied by  $F_0.$ The boundary conditions at nodes $x=0$ and $x=N$ are respectively $C_0 W_0(0)=\left(\begin{array}{l}
0  \\ 
0 
\end{array} \right),$ and 
$C_{N-1} W_{N-1}(N)=\left(\begin{array}{l}
0  \\ 
0 
\end{array} \right),$
 where $C_0, C_{N-1}$ are  the matrices  given in (\ref{C0CN}). 
Since the second lines of $C_0$ and $C_{N-1}$ vanish the previous equations may be written as 
$$(1,-1). W_0(0)=0,\;  \mbox{\rm and }  (1,0). W_{N-1}(N)=0,$$
where "$.$" represents the matrix-product of a vector line by a vector column. 
These equations are  equivalent to
\be\label{eq1} (1,-1).e^{-i\lb B_0^{-1} }F_0 = (1,-1),\tilde{G_0}(0) \ee
and 
$$(1,0).e^{i\lb B_{N-1}^{-1} }F_{N-1} = (1,0).\tilde{G_{N-1}}(N). $$
Inserting (\ref{Fj}) in the previous equation we get 
\be\label{eq2}(1,0).e^{i\lb B_{N-1}^{-1} }M_{N-1}(\lb)F_0 = (1,0).\left( \tilde{G}_{N-1}(N)+e^{i\lb B_{N-1}^{-1} } \Gamma_{N-1}(\lb)\right). \ee
If we denote by $H_{N-1}$ the $2\times 2$ matrix whose the first line is the vector line is $(1,-1).e^{-i\lb B_0^{-1} }$ 
and the second line is $(1,0).e^{i\lb B_{N-1}^{-1} }M_{N-1}(\lb)$ i.e 

\be\label{HN} H_{N-1}=\left(\begin{array}{ll}
(1,-1).e^{-i\lb B_0^{-1} }\\ 
(1,\;\;0).e^{i\lb B_{N-1}^{-1}}
\end{array} \right)\ee
and $Y_{N-1}$ the $2\times 1$ vectors columns by

\be \label{YN} Y_{N-1}=\left(\begin{array}{c}
(1,-1).\tilde{G_0}(0)\\ 
(1,0).\left( \tilde{G_{N-1}}(N)+e^{i\lb B_{N-1}^{-1} } \Gamma_{N-1}(\lb)\right)
\end{array} \right)\ee
then equation (\ref{eq1}) and (\ref{eq2}) are equivalent to the following system: 

\be \label{eq3}  H_{N-1} F_0= Y_{N-1}. \ee

{\bf Second  step: estimate of $F_0$} \\

We first start by given an estimation of $\tilde{G}=(\tilde{G_0},...,\tilde{G}_{N-1})$ where $\tilde{G_j}$ are defined in (\ref{Gjt}).

For all $j=0,...,N-1,$ the matrix is $B_j$ is invertible: 
$$B_j^{-1}=\left(
\begin{array}{cc}
 0 & \frac{1}{\rho_j} \\
 1 & 0
\end{array}
\right)$$

\ and we  easily find after some computation that 
\be\label{expbj} e^{i\lb x  B_j^{-1}}=\left(
\begin{array}{cc}
 \cos(\dfrac{\lb x}{\sqrt{\rho_j}}) & \dfrac{i \sin(\dfrac{\lb x}{\sqrt{\rho_j}})}{\sqrt{\rho_j}} \\
 i \sqrt{\rho_j}\sin(\dfrac{\lb x}{\sqrt{\rho_j}}) & \cos(\dfrac{\lb x}{\sqrt{\rho_j}})
\end{array}
\right).
\ee
Since $\lb \in \R,$ from the previous identity, we directly get the following estimates 

\be \label{est1}   
|\tilde{G_j}(j)| 
\lesssim \norm{G}{H},\;|\tilde{G_j}(j+1)| 
\lesssim \norm{G}{H}, j=0,...,N-1 \ \ee

\be \label{est2}   
\norm{\tilde{G}}{H}  
\lesssim \norm{G}{H}. \ee

From  the definition of $\Gamma_j$ in (\ref{gammaj}) we also get 
\be \label{est3} |\Gamma_j(\lb) | \lesssim  \norm{G}{H}, j=1,...,N-1. \ee

It follows that 
\be \label{est4} \|Y_{N-1}\| \lesssim \norm{G}{H}. \ee

Note that from (\ref{expbj}) the entries of $H_{N-1}$ are bounded and so it is for the entries of the matrix $( \mbox{\rm com} H_{N-1})^T.$  Assume for the moment that 
there exists a constant $\gamma_{N-1}>0$ such that 
\be \label{det} \forall \lb \in\R, |\det(H_{N-1})|\geq \gamma_{N-1},\ee 
then it follows with (\ref{est4}) that 
\be\label{estF0} \|F_0\|=\dfrac{1} {\det H_{N-1}} ( \mbox{\rm com} H_{N-1})^T Y_{N-1} \lesssim  \norm{G}{\caH}.\ee
It remains to prove (\ref{det}).


The idea of the proof is that (\ref{det}) is well known for $N=1$ and that this property spreads by iteration. 

First, similarly to (\ref{HN}) we define for all $N\in \N^*$ the matrix 
\be\label{HNt} \tilde{H}_{N-1}=\left(\begin{array}{ll}
(1,-1).e^{-i\lb B_0^{-1} }\\ 
(0,\;\;1).e^{i\lb B_{N-1}^{-1}}
\end{array} \right)\ee 
and we set 
$$D_{N-1}=\det(H_{N-1}),\;\; \tilde{D}_{N-1}=\det(\tilde{H}_{N-1}), \forall N\in \N^*.$$
Particularly, for $N=1$ we have 
$$H_0=\left(
\begin{array}{cc}
 \cos(\dfrac{\lb}{\sqrt{\rho_0}})+i \sqrt{\rho_0} \sin(\dfrac{\lb}{\sqrt{\rho_0}}) & -\cos(\dfrac{\lb}{\sqrt{\rho_0}})-i\dfrac{ \sin(\dfrac{\lb}{\sqrt{\rho_0}})}{\sqrt{\rho_0}} \\
 \cos(\dfrac{\lb}{\sqrt{\rho_0}}) & -i\dfrac{ \sin(\dfrac{\lb}{\sqrt{\rho_0}})}{\sqrt{\rho_0}}
\end{array}
\right),$$
and 
$$\tilde{H}_0=\left(
\begin{array}{cc}
 \cos(\dfrac{\lb}{\sqrt{\rho_0}})+i \sqrt{\rho_0} \sin(\dfrac{\lb}{\sqrt{\rho_0}}) & -\cos(\dfrac{\lb}{\sqrt{\rho_0}})-i\dfrac{ \sin(\dfrac{\lb}{\sqrt{\rho_0}})}{\sqrt{\rho_0}} \\
i \sqrt{\rho_0} \sin(\dfrac{\lb}{\sqrt{\rho_0}}) &  \cos(\dfrac{\lb}{\sqrt{\rho_0}})
\end{array}
\right).$$
Thus $$D_0= \cos(\dfrac{2\lb}{\sqrt{\rho_0}}) +  i \dfrac{\sin(\dfrac{2\lb}{\sqrt{\rho_0}})}{\sqrt{\rho_0}},\; \tilde{D}_0=
\cos(\dfrac{2\lb}{\sqrt{\rho_0}}) +  i \sqrt{\rho_0} \sin(\dfrac{2\lb}{\sqrt{\rho_0}}),$$
and  $$|D_0|^2=\cos^2(\dfrac{2\lb}{\sqrt{\rho_0}})+\dfrac{\sin^2(\dfrac{2\lb}{\sqrt{\rho_0}})}{\rho_0}\geq \min(1,\dfrac{1}{\rho_0})>0,$$
 $$|\tilde{D}_0|^2=\cos^2(\dfrac{2\lb}{\sqrt{\rho_0}})+\rho_0\sin^2(\dfrac{2\lb}{\sqrt{\rho_0}})\geq \min(1,\rho_0)>0.$$
It is useful for the sequel to remark that 
$$\Re(D_0 \overline{\tilde{D}_0)}=1.$$  
Using (\ref{expbj}) we have the following identity 
$$\left\{\begin{array}{lll}
D_{N-1} =  \cos(\dfrac{\lb}{\sqrt{\rho_{N-1}}})D_{N-2}+\dfrac{i}{\sqrt{\rho_{N-1}}} \sin(\dfrac{\lb }{\sqrt{\rho_{N-1}}})\tilde{D}_{N-2}, \\ 
\tilde{D}_{N-1} = i  \sqrt{\rho_{N-1}} \sin(\dfrac{\lb }{\sqrt{\rho_{N-1}}})D_{N-2}+\cos(\dfrac{\lb}{\sqrt{\rho_{N-1}}})\tilde{D}_{N-2}.
\end{array} \right.
$$
A simple computation shows that 
$$\Re(D_{N-1}\overline{\tilde{D_{N-1}}})=\Re(D_{N-2}\overline{\tilde{D}_{N-2}})$$ consequently,
$$\forall N \in \N^*, \;\Re(D_{N-1}\overline{\tilde{D}_{N-1}})=1.$$

Now, since
$$|D_{N-1}|^2=
$$
$$
(\cos(\dfrac{\lb}{\sqrt{\rho_{N-1}}}),\sin(\dfrac{\lb}{\sqrt{\rho_{N-1}}}))
\left(\begin{array}{ll}
|D_{N-2}|^2 &\dfrac{1}{\sqrt{\rho_{N-1}}} \Im(D_{N-2}\overline{\tilde{D}_{N-2}}) \\ 
\dfrac{1}{\sqrt{\rho_{N-1}}}\Im(D_{N-2}\overline{\tilde{D}_{N-2}}) & |\dfrac{1}{\sqrt{\rho_{N-1}}}\tilde{D}_{N-2}|^2
\end{array} \right)
\left(\begin{array}{l}
\cos(\dfrac{\lb}{\sqrt{\rho_{N-1}}}) \\ 
\sin(\dfrac{\lb}{\sqrt{\rho_{N-1}}})
\end{array} \right),
$$
it follows that  
\be \label{mu}|D_{N-1}|^2 \geq \mu_{min,N-2},\ee
 where $\mu_{min,N-2}$ is the smallest eigenvalue of the matrix in the previous identity.
The determinant of this matrix is: 
$$\dfrac{1}{\rho_{N-1}}\Re(D_{N-2}\overline{\tilde{D}_{N-2}})^2=\dfrac{1}{\rho_{N-1}}.$$
Since $D_{N-2}$ and $\tilde{D}_{N-2}$ are clearly bounded the trace of this matrix is bounded, i.e 
$$\exists C'_{N-1}>0, |D_{N-2}|^2+ |\dfrac{1}{\sqrt{\rho_{N-1}}}\tilde{D}_{N-2}|^2 \leq C'_{N-1}.$$ 
It follows that $$\mu_{min,N-2}\geq \dfrac{1}{\rho_{N-1} C'_{N-1}}.$$ 
Setting $C_{N-1}=\sqrt{\dfrac{1}{\rho_{N-1} C'_{N-1}}},$ then (\ref{mu}) implies (\ref{det}). Consequently we have prove the estimate (\ref{estF0}) for $F_0.$

Finally, using estimates (\ref{estF0}), (\ref{est1}), (\ref{est2}) in (\ref{F1}) and (\ref{Fj0}) and the fact that the matrices involved in (\ref{Fj0}) are uniformly bounded we get 
$$\|F_j\| \lesssim  \norm{G}{\caH}, \forall j=1,...N-1.$$
Using (\ref{estF0}) and the previous estimates in (\ref{W0}) and (\ref{Wj}), we get (\ref{estresolv}).

Which implies \rfb{1.9n} and thereafter \rfb{1.9}, and end the proof of Theorem \ref{lr}.

\end{proof}

\section{Comments and related questions}
The same strategy can be applied to stabilize the following models and to verify and compute the transfer function.
 
\subsection{Transfer function}
We can use the same strategy to verify that the operator $H(\lambda) = \lambda \, C^*(\lambda^2 I +\underline{A}  )^{-1} \, C \in {\cal L}(U), \, \lambda \in \cline_+,$ satisfies the property (\ref{Hest}) of the following lemma,
where here $\underline{A}$ is the self-adjoint operator corresponding to the conservative problem associated  to problem (P), namely we replace in (P) the boundary feedback condition by 
\begin{equation}\label{newe2}
\rho_0 \, \partial_x u_{0}(t,0) =0,
\end{equation} 
i.e., 

$\underline{A} : {\cal D}(\underline{A}) \subset \underline{H} = \ds \prod_{j=0}^{N-1} L^2(j,j+1) \rightarrow \underline{H}$ is defined by 
\[\underline{A} (\underline{u}):=(- \, \rho_j \partial_x^2u_{j})_{0 \leq j\leq N-1},
\] 
with
\begin{multline*}
{\cal D}(\underline{A}):=\left\{\underline{u} \in \prod_{j=0}^{N-1} H^2(j,j+1)\,:
\mbox {\textrm{satisfies }} \,(\ref {e2bn}) \; \mbox{\textrm{to}} \; (\ref {e5bn}) \; \mbox{\textrm{hereafter}}
\right\},\end{multline*}
\begin{equation}\label{e2bn}
\rho_0 \, \partial_x u_{0}(0) =  0
\end{equation}
\begin{equation}\label{e5bn}
- \rho_j \partial_x u_{j}(j) + \rho_{j-1} \partial_x u_{j-1}(j)= 0, \quad j = 1,...,N-1. 
\end{equation}
$$
C \in {\cal L}(\mathbb{C}, V^\prime = {\cal D}(\underline{A}^\half)^\prime), \, Ck = \sqrt{\rho_0} \, \underline{A}_{-1} \underline{N} k = k \, \left( \begin{array}{c} \frac{1}{\sqrt{\rho_0}} \,  \delta_0 \\ . \\ . \\. \\ 0 \end{array} \right), \, \forall \, k \in \mathbb{C}, 
$$
$$
C^* \underline{u} = \left( \frac{1}{\sqrt{\rho_0}} \, u_0(0) \ 0 \; ... \; 0 \right), \, \forall \, \underline{u} \in V,
$$
where $\underline{A}_{-1}$ is the extension of $\underline{A}$ to 
$({\cal D}(\underline{A}))^\prime$ (the duality is in the sense of $\underline{H}$) and $\underline{N}$ is the Neumann map, 
$$
\left\{
\begin{array}{lll}
\rho_j \, \partial_{x}^2 (\underline{N}k)_j = 0, \, (j,j+1), \, 0 \leq j \leq N -1, \\ \partial_{x} (\underline{N} k)_0 (0) = k, \, (\underline{N}k)_{N -1} (N) = 0,\\
\rho_{j-1} \, \partial_{x} (\underline{N}k)_{j-1} (j) = \rho_{j} \, \partial_{x} (\underline{N}k)_{j} (j), \, 1 \leq j \leq N-1.
\end{array}
\right.
$$


\begin{lemma} The transfer function $H$ satisfies the following estimate:   
\be\label{Hest}
\sup_{\Re \lambda = \gamma} \left\| \lambda \, C^*(\lambda^2 I 
+\underline{A})^{-1} \, C \right\|_{{\cal L}(U)} < \infty,
\ee
for $\gamma > 0.$
\end{lemma}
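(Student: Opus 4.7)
The plan is to follow verbatim the strategy of the proof of Lemma \ref{lemresolvent}, transferred from the imaginary axis to the shifted line $\Re\lambda = \gamma > 0$. For $k \in U = \C$, set $z := (\lambda^2 I + \underline{A})^{-1}Ck$. Unwinding the definitions of $\underline{A}_{-1}$ and the Neumann lifting $\underline{N}$ shows that $z = (z_0,\ldots,z_{N-1})$ is the unique solution of the transmission problem
\begin{equation*}
\lambda^2 z_j - \rho_j \partial_x^2 z_j = 0 \text{ on } (j,j+1), \quad \rho_0 \partial_x z_0(0) = \sqrt{\rho_0}\, k,\quad z_{N-1}(N)=0,
\end{equation*}
together with continuity $z_{j-1}(j) = z_j(j)$ and flux-balance $\rho_{j-1}\partial_x z_{j-1}(j) = \rho_j \partial_x z_j(j)$ at the interior nodes. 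Since $C^*z = z_0(0)/\sqrt{\rho_0}$, one has $H(\lambda)k = \lambda\, z_0(0)/\sqrt{\rho_0}$, and the claim reduces to the uniform bound $|\lambda|\,|z_0(0)| \lesssim |k|$ on $\Re\lambda = \gamma$.

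Next I would mimic the first-order reformulation from Section \ref{resolvent}: put $W_j := (\lambda z_j,\rho_j \partial_x z_j)^T$, so that $\partial_x W_j = \lambda B_j^{-1}W_j$ on $(j,j+1)$ (no source term, since the input $k$ enters only through the boundary condition at $x = 0$). The fundamental matrix $e^{\lambda x B_j^{-1}}$ is built from $\cosh(\lambda x/\sqrt{\rho_j})$ and $\sinh(\lambda x/\sqrt{\rho_j})$, whose moduli remain uniformly bounded in $|\Im\lambda|$ since $\Re\lambda = \gamma$ is fixed. Propagating $W$ from $x=0$ across the chain via the transmission conditions, exactly as in the derivation of \rfb{eq3}, reduces the problem to a $2\times 2$ linear system $\widehat H_{N-1}(\lambda)\,F_0 = Y(k)$ where $\widehat H_{N-1}$ is the analogue of $H_{N-1}$ from \rfb{HN} with $i\beta$ replaced by $\lambda$, and where $|Y(k)| \lesssim |k|$ uniformly.

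The heart of the argument is then the uniform lower bound on $|\det \widehat H_{N-1}(\lambda)|$. I would introduce $\tilde H_{N-1}(\lambda)$ in perfect parallel to \rfb{HNt} and set $D_{N-1}(\lambda) := \det \widehat H_{N-1}(\lambda)$, $\tilde D_{N-1}(\lambda) := \det \tilde H_{N-1}(\lambda)$. Because $\det e^{\lambda B_j^{-1}} = 1$, the same bilinear two-term recursion relating $(D_{N-1},\tilde D_{N-1})$ to $(D_{N-2}, \tilde D_{N-2})$ still holds, now with $\cos,\sin$ replaced by $\cosh,\sinh$. A direct computation on the base case $N = 1$ pins down $\Re(D_0\overline{\tilde D_0})$ as a strictly positive constant depending only on $\gamma$ and $\rho_0$; the recursion preserves this quantity via $\cosh^2 - \sinh^2 = 1$, and the quadratic-form trick of \rfb{mu} then yields $|D_{N-1}(\lambda)| \geq c(\gamma, \rho_0,\ldots,\rho_{N-1}) > 0$ uniformly in $\Im \lambda$. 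Together with the boundedness of the cofactors of $\widehat H_{N-1}$ and a straightforward back-substitution through the first-order system, this delivers $|\lambda\,z_0(0)| \lesssim |k|$, which is \rfb{Hest}.

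The main obstacle I foresee is precisely the verification of the invariance identity and the base-case computation in the shifted regime: the proof of Lemma \ref{lemresolvent} used $\lambda \in i\R$ crucially to obtain $\Re(D_0\overline{\tilde D_0}) = 1$, and one must reconfirm that the hyperbolic analogue still produces a strictly positive $\gamma$-dependent constant and that the recursion continues to preserve it. Signs and phases must be tracked carefully, but no new analytic ingredient beyond the proof of Lemma \ref{lemresolvent} should be required.
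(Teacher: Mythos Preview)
Your approach is the paper's. Two adaptations to flag, both of which your final paragraph anticipates but does not quite resolve. First, since the conservative problem carries the Neumann condition $\rho_0\partial_x z_0(0)=\sqrt{\rho_0}\,k$ at $x=0$ rather than the feedback condition, the first row of $\widehat H_{N-1}$ must be $(0,1).e^{-\lambda B_0^{-1}}$, not the $(1,-1)$ row of \rfb{HN}; the base case then gives $\Re(D_0\overline{\tilde D_0})=\tfrac{1}{2}\sinh(2\gamma/c_0)>0$. Second, on $\Re\lambda=\gamma$ the recursion does \emph{not} preserve $\Re(D_{N-1}\overline{\tilde D_{N-1}})$: the paper's computation yields
\[
\Re(D_{N-1}\overline{\tilde D_{N-1}}) \;=\; \tfrac{1}{2}\sinh(\tfrac{2\gamma}{c_{N-1}})\bigl(c_{N-1}|D_{N-2}|^2+\tfrac{1}{c_{N-1}}|\tilde D_{N-2}|^2\bigr)+\cosh(\tfrac{2\gamma}{c_{N-1}})\Re(D_{N-2}\overline{\tilde D_{N-2}}),
\]
so the inductive hypothesis propagates only as an inequality. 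Consequently the quadratic-form trick of \rfb{mu} (which relied on real $\cos,\sin$ coordinates) is replaced by the simpler observation $|D_{N-1}|\,|\tilde D_{N-1}|\geq\Re(D_{N-1}\overline{\tilde D_{N-1}})\geq k_{N-1}$ combined with a uniform upper bound on $|\tilde D_{N-1}|$ along $\Re\lambda=\gamma$. With these two adjustments your sketch goes through verbatim.
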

\begin{proof}

In the same way as the proof of Lemma \ref{lemresolvent}, we give an equivalent formulation of the function $H.$ For that purpose, we consider  $W=(W_j)_{0 \leq j \leq N-1} \in H,W_j \in (H^1(j,j+1))^2$ solution of
\be \label{respbb}  
\begin{array}{l}
(\lambda - B \partial_x ) W =0, \\
W_{j-1} (j) = W_j (j), 1 \leq j \leq N-1, \\
\tilde{C}_0 W_0 (0 ) =  \left(\begin{array}{l}
z \\
0
\end{array} \right)
,\, C_{N-1} W_{N-1} (N) = 0  
\end{array}
\ee where $z \in \cline,$ $B$ is defined as in the proof of Lemma \ref{lemresolvent}, $C_{N-1}$ is the matrix given in (\ref{C0CN}) and 
$\tilde{C}_0  =\left(\begin{array}{ll}
0 &1  \\ 
0&0
\end{array} \right).$
Therefore, for $\lambda \in \cline, \Re(\lambda)=\gamma>0,$  the transfer function is
$$ H(\lambda) : z\in \cline \mapsto (1,0). W_0(0)\in \cline.$$ 
Consequently to prove (\ref{Hest}) it suffices to check that for  a fixed  $\gamma>0,$ there exists a constant $c_\gamma>0$ such that 
\be \label{448} \forall \lambda \in \cline, \Re(\lambda)= \gamma,\;\; |(1,0). W_0(0)| \leq c_\gamma |z| . 
\ee

Using (\ref{W0}), (\ref{Wj}), (\ref{Fj0}), we 
have 
$$W_0(x)=e^{\lambda (x-1) B_0^{-1}} F_0, W_j(x)=e^{\lambda (x-j)B_j^{-1}} F_j,j =1,...,N-1,$$
where 
$$ F_0=W_0(1),\; F_1=F_0,\; \mbox{\rm and } F_j=\left( \prod_{k=j-1}^{k=1} e^{\lambda B_k^{-1}}\right)F_0, \; j=2,...,N-1. 
$$
Therefore, from the boundary conditions at $x=0$ and $x=N,$ we find that $F_0$ is the solution of 
$$ H_{N-1}F_0= \left(\begin{array}{l}
z \\
0
\end{array} \right),$$
where 
$H_{N-1}$ is the $2\times2 $ matrix
$$\left(\begin{array}{ll}
(0,1).e^{-\lambda B_0^{-1}} \\ 
(1,0 ). \left(\displaystyle \prod_{k=N-1}^{k=1} e^{\lambda B_k^{-1}}\right)
\end{array} \right), 
$$
with the convention that $\left(\displaystyle \prod_{k=N-1}^{k=1} e^{\lambda B_k^{-1}}\right)
$ is the identity matrix if $N=1.$ 

{\bf Estimate of $F_0$}\\
Since for all $j$ $$e^{\lambda  x B_j^{-1} }=\left(
\begin{array}{cc}
 \cosh(\frac{\lambda x}{c_j}) & \frac{\sinh(\frac{ \lambda x}{c_j})}{c_j} \\
 c_j \sinh(\frac{\lambda x }{c_j}) & \cosh(\frac{ \lambda x}{c_j})
\end{array}
\right),
$$
it is clear that there exists a constant $c'_\gamma>0$ sucht that 
\be \label{up} \forall \lambda \in \cline, \Re(\lambda)=\gamma,\; \|H_{N-1}\|\leq c'_\gamma.\ee
We need a similar estimate for $H_{N-1}^{-1};$ this will be done by giving a lower uniform bound of $|D_{N-1}|$ on the line $\Re(\lambda)=\gamma,$ where 
we have set $D_{N-1}=\det(H_{N-1}).$
Thus we introduce the matrix
$$\tilde{H}_{N-1}= \left(\begin{array}{ll}
(0,1).e^{-\lambda B_0^{-1}} \\ 
(0,1). \left( \prod_{k=N-1}^{k=1} e^{\lambda B_k^{-1}}\right)
\end{array} \right), \; N\geq 1,
$$
and set $\tilde{D}_{N-1}=\det(\tilde{H}_{N-1}).$ 
Now, we prove by iteration that
$$ \Re (D_{N-1} \overline{\tilde{D}_{N-1}})\geq k_{N-1} >0.$$   
For $N=1$ we have 
$$H_0=\left(
\begin{array}{cc}
-\sinh(\frac{\lambda }{c_0}) &\cosh(\frac{\lambda }{c_0}) \\
 1 & 0
\end{array}
\right),\; 
\tilde{H}_0=\left(
\begin{array}{cc}
-\sinh(\frac{\lambda }{c_0}) &\cosh(\frac{\lambda }{c_0}) \\
 0 & 1
\end{array}
\right),
$$
thus $$\Re (D_0 \overline{\tilde{D}_0})=\dfrac{1}{2} \sinh(\frac{2 \gamma}{c_0} )>0.$$ 
Assume that there exists a constant $k_{N-2}>0$ such that
\be \label{rere} \Re (D_{N-2} \overline{\tilde{D}_{N-2}})\geq k_{N-2} >0.\ee  

We have the following easily checked identities 
$$D_{N-1}= \cosh(\frac{\lambda}{c_{N-1}}) D_{N-2}+\dfrac{1}{c_{N-1}}\sinh(\frac{\lambda}{c_{N-1}})\tilde{D}_{N-2} $$
$$\tilde{D}_{N-1}=c_{N-1} \sinh(\frac{\lambda}{c_{N-1}}) D_{N-2}+\cosh(\frac{\lambda}{c_{N-1}})\tilde{D}_{N-2}. $$
A computation leads to 
$$\begin{array}{lll}
\Re (D_{N-1} \overline{\tilde{D}_{N-1}})&=&\Re(\cosh(\frac{\lambda}{c_{N-1}})\overline{\sinh(\frac{\lambda}{c_{N-1}})}) 
(c_{N-1}|D_{N-2}|^2+\dfrac{1}{c_{N-1}}|\overline{\tilde{D}_{N-2}}|^2)\\
&+&(|\cosh(\frac{\lambda}{c_{N-1}})|^2+|\sinh(\frac{\lambda}{c_{N-1}})|^2)\Re (D_{N-2} \overline{\tilde{D}_{N-2}})\\
&=& \dfrac{1}{2} \sinh(\frac{2 \gamma}{c_{N-1}} )(c_{N-1}|D_{N-2}|^2+\dfrac{1}{c_{N-1}}|\overline{\tilde{D}_{N-2}}|^2)\\
&+&\cosh(\frac{2 \gamma}{c_{N-1}} )\Re (D_{N-2} \overline{\tilde{D}_{N-2}})\\
&\geq& \cosh(\frac{2 \gamma}{c_{N-1}} ) k_{N-2}\\
&=& k_{N-1}>0. 
\end{array} 
$$
We have proved (\ref{rere}). It follows 
$$|D_{N-1} \tilde{D}_{N-1}|\geq k_{N-1}>0.$$ But $|\tilde{D}_{N-1}|$  is obviously upper bounded on the line $\Re(\lambda)=\gamma,$ consequently there exists a constant $k'_{N-1}>0$ such that 
$$\forall \lambda, \Re(\lambda)=\gamma,\; |D_{N-1}|\geq k'_{N-1}>0.$$
Finally, with (\ref{up}) we deduce that $H_{N-1}^{-1}$ is bounded on the line $\Re(\lambda)=\gamma$ and it follows that there exits $c_\gamma >0$ such that 
$$\forall z \in \cline, \forall \lambda :  \Re(\lambda)=\gamma,\; |F_0|\leq c_\gamma z.$$

Conclusion: (\ref{448}) is a direct consequence of the previous estimate. The proof is complete 

\end{proof}
As application is that the open loop system associated to $(P)$ is satisfies a regularity property. 
\begin{corollary}
Let $T > 0$. Then, for all $v \in L^2(0,T)$ the following problem
\begin{equation*}
\left \{
\begin{array}{l}
(\partial_t^2 \psi_{j}- \rho_j \partial_x^2 \psi_{j})(t,x)=0,\, x\in(j,j+1),\, t\in(0,\infty),\, j = 0,...,N-1, \\
\rho_0 \, \partial_x \psi_0(t,0) = v(t),\ \psi_{N-1}(t,N)=0,\, t\in(0,\infty),\\
\psi_{j-1}(t,j)=\psi_{j}(t,j),\, t\in(0,\infty),\, j = 1,...,N-1,\\
- \rho_{j-1} \partial_x \psi_{j-1}(t,j)+ \rho_j \partial_x \psi_{j}(t,j)= 0,\, t\in(0,\infty),\, j = 1,...,N-1, \\
\psi_j(0,x)=0,\ \partial_t \psi_j(0,x)=0, \,x \in (j,j+1),\,   j=0,...,N-1.
\end{array}
\right.
\end{equation*}
admits a unique solution $(\psi,\partial_t \psi) \in C(0,T; {\cal H})$ which satisfies the following regularity property (says also open loop admissibility): there exists a constant $C > 0$ such that 
$$
\int_0^T \left| \partial_t \psi_0 (t,0)\right|^2 \, dt \leq C \, \left\|v\right\|^2_{L^2(0,T)}, \, \forall \, v \in L^2(0,T).
$$
\end{corollary}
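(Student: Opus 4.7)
The plan is to recognize this corollary as the open-loop admissibility counterpart of the transfer-function estimate just proved, and to invoke the abstract equivalence theorem (see \cite{ammari1}) that, for a second-order system with colocated control and observation, equates boundedness of $H(\lambda)$ on a vertical line in the right half-plane with admissibility of the input-to-output map from $L^2(0,T)$ into $L^2(0,T)$.

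First I would rewrite the open-loop problem in the abstract second-order form
\be
\ddot\psi(t) + \underline{A}\,\psi(t) = C\,v(t), \quad \psi(0)=0, \quad \dot\psi(0)=0,
\ee
with $\underline{A}$, $C$ and the Neumann map $\underline{N}$ defined just before the preceding lemma. A direct verification shows that a solution of this abstract equation satisfies the non-homogeneous condition $\rho_0\,\partial_x\psi_0(t,0)=v(t)$ together with the transmission and Dirichlet conditions listed in the statement: indeed, $Cv = \sqrt{\rho_0}\,\underline{A}_{-1}\underline{N}v$ is precisely the standard lifting of an inhomogeneous Neumann datum through the conservative operator. Well-posedness of the abstract problem in $C(0,T;\mathcal{H})$ then reduces to admissibility of $C$ as a control operator for $\underline{A}$, which by duality is the same as admissibility of the observation operator $C^*$ along the conservative flow generated by $i\underline{A}^{1/2}$.

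The key step is to apply the Ammari-Tucsnak theorem in its second-order version: for the colocated pair $(\underline{A},C)$, the input-output map $v\mapsto C^*\dot\psi$ is bounded from $L^2(0,T)$ into $L^2(0,T)$ if and only if
$$
\sup_{\Re\lambda=\gamma}\bigl\|\lambda\,C^*(\lambda^2 I+\underline{A})^{-1}C\bigr\|_{\mathcal{L}(U)} < \infty
$$
for some $\gamma>0$. The preceding lemma provides exactly this bound, so there exists $M>0$ with
$$
\int_0^T |C^*\dot\psi(t)|^2\, dt \;\leq\; M\,\|v\|^2_{L^2(0,T)}.
$$
Since the definition of $C^*$ gives $C^*\dot\psi(t)=(1/\sqrt{\rho_0})\,\partial_t\psi_0(t,0)$, the right-hand side of the corollary follows with constant $C=\rho_0 M$.

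The main obstacle I anticipate is the abstract bookkeeping: one must verify carefully that the formal definition $C = \sqrt{\rho_0}\,\underline{A}_{-1}\underline{N}$ does produce the inhomogeneous boundary condition $\rho_0\,\partial_x\psi_0(t,0)=v(t)$ for the weak solution of the abstract equation, and that the corresponding $C^*$ evaluated on $\dot\psi$ is indeed the pointwise trace $\partial_t\psi_0(t,0)$ up to the $\sqrt{\rho_0}$ factor. Once this identification is in place, the corollary is a direct consequence of the transfer-function lemma via the abstract equivalence, and no further PDE analysis is required.
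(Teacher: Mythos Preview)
Your proposal is correct and matches the paper's intended approach. In fact, the paper gives no explicit proof at all for this corollary: it simply states it as an ``application'' of the preceding transfer-function lemma, relying implicitly on the abstract Ammari--Tucsnak framework, and your write-up supplies exactly the details the paper leaves out. One small correction: the relevant reference in the paper's bibliography is \cite{ammari} (Ammari--Tucsnak, \emph{ESAIM COCV} 2001), not \cite{ammari1}.
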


Moreover, according to \cite[Theorem 2.2]{ammari}, we have that: 

\begin{corollary}
The system $(P)$ is exponentially stable in the energy space if and only if there exists $T, C > $ such that 
$$
\int_0^T \left| \partial_t \varphi_0 (t,0)\right|^2 \, dt \geq C \, \left\|(\varphi^0,\varphi^1)\right\|^2_{{\cal H}}, \, \forall \, (\varphi^0,\varphi^1) \in {\cal D}({\cal A}),
$$
where 
\begin{multline*}
{\cal D}({\cal A}):=\left\{(\underline{u},\,\underline{v})\in \prod_{j=0}^{N-1} H^2(j,j+1) \times V \,:
\mbox {\textrm{satisfies }} \,(\ref {e2sd}) \; \mbox{\textrm{to}} \; (\ref {e5sd}) \; \mbox{\textrm{hereafter}}
\right\},\end{multline*}
\begin{equation}\label{e2sd}
\rho_0 \, \partial_x u_{0}(0) =  0
\end{equation}
\begin{equation}\label{e5sd}
- \rho_j \partial_x u_{j}(j) + \rho_{j-1} \partial_x u_{j-1}(j)= 0, \quad j = 1,...,N-1. 
\end{equation}
and
$\varphi = (\varphi_0,...,\varphi_{N-1})$ satisfies the following problem
\begin{equation*}
\left \{
\begin{array}{l}
(\partial_t^2 \varphi_{j}- \rho_j \partial_x^2 \varphi_{j})(t,x)=0,\, x\in(j,j+1),\, t\in(0,\infty),\, j = 0,...,N-1, \\
\rho_0 \, \partial_x \varphi_0(t,0) = 0,\ \varphi_{N-1}(t,N)=0,\, t\in(0,\infty),\\
\varphi_{j-1}(t,j)=\varphi_{j}(t,j),\, t\in(0,\infty),\, j = 1,...,N-1,\\
- \rho_{j-1} \partial_x \varphi_{j-1}(t,j)+ \rho_j \partial_x \varphi_{j}(t,j)= 0,\, t\in(0,\infty),\, j = 1,...,N-1, \\
\varphi_j(0,x)=\varphi_j^0(x),\ \partial_t \varphi_j(0,x)=\varphi_j^1(x), \,x \in (j,j+1),\,   j=0,...,N-1.
\end{array}
\right.
\end{equation*}
\end{corollary}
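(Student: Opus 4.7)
The plan is to reduce the claim to a direct application of \cite[Theorem 2.2]{ammari}, which is a general characterization (available for abstract second order systems with collocated boundary feedback) of exponential stability in terms of an observability inequality for the associated conservative system. The work will essentially consist of recognizing that the hypotheses of that abstract result have already been established in the preceding material.

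First, I would fix the abstract setting: the state space is $\underline{H}=\prod_{j=0}^{N-1} L^2(j,j+1)$, the conservative generator is the self-adjoint positive operator $\underline{A}$ introduced in the preceding subsection (corresponding to system $(P)$ in which the feedback condition $\rho_0\partial_x u_0(t,0)=\partial_t u_0(t,0)$ is replaced by the homogeneous Neumann condition $\rho_0\partial_x u_0(t,0)=0$), and the control operator is the unbounded operator $C\in\mathcal{L}(\mathbb{C},V')$ already written down, with adjoint $C^*\underline{u}=\frac{1}{\sqrt{\rho_0}}u_0(0)$. In this formulation the dissipative system $(P)$ coincides with the closed loop obtained by setting the input equal to $-C^*\partial_t\underline{u}$, while the free system governed by $\underline{A}$ is the conservative problem whose solutions are the functions $\varphi$ appearing in the statement; the observation $C^*\partial_t\underline{\varphi}$ is then (up to the harmless constant $1/\sqrt{\rho_0}$) exactly $\partial_t\varphi_0(t,0)$.

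Second, I would verify the two hypotheses needed by \cite[Theorem 2.2]{ammari}. Admissibility of the pair $(\underline{A},C)$ for the open loop is precisely the content of the preceding corollary, which provides the estimate $\int_0^T|\partial_t\psi_0(t,0)|^2\,dt\le C\|v\|_{L^2(0,T)}^2$ for the problem with Neumann-type boundary control $v$; by duality/transposition this is equivalent to admissibility of the observation $C^*$ for the conservative group generated by $\underline{A}$. The second hypothesis is the boundedness of the transfer function $H(\lambda)=\lambda C^*(\lambda^2 I+\underline{A})^{-1}C$ on some vertical line $\Re\lambda=\gamma>0$, which is exactly the estimate \eqref{Hest} proved in the previous lemma.

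Once these two hypotheses are in place, \cite[Theorem 2.2]{ammari} yields the stated equivalence: exponential stability of the feedback system $(P)$ in $\mathcal{H}$ is equivalent to the existence of $T,C>0$ with
\[
\int_0^T\bigl|\partial_t\varphi_0(t,0)\bigr|^2\,dt\;\ge\;C\,\bigl\|(\varphi^0,\varphi^1)\bigr\|_{\mathcal{H}}^2
\qquad\forall\,(\varphi^0,\varphi^1)\in\mathcal{D}(\mathcal{A}),
\]
for every smooth initial datum of the conservative system. The main (essentially the only) nontrivial point is the correct matching of the abstract framework of \cite{ammari} with the present network setting, in particular identifying $\underline{N}$ as the Neumann map and checking that $C^*=\sqrt{\rho_0}\,\underline{N}^*\underline{A}$ gives back the pointwise trace at $x=0$; the remaining work is a direct invocation of the cited theorem.
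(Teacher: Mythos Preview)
Your proposal is correct and follows exactly the paper's approach: the corollary is stated as a direct consequence of \cite[Theorem 2.2]{ammari}, and you have simply spelled out in detail why that abstract result applies (identifying $\underline{A}$, $C$, $C^*$, and checking admissibility via the preceding corollary and the transfer-function bound via the preceding lemma). The paper gives no proof beyond the citation, so your write-up is in fact more complete than the original.
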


\section{Schr\"{o}dinger system} \label{Schr}

\setcounter{equation}{0}
We consider the evolution problem $(S)$  described by the following system of $N$ equations: 
\begin{equation*}
\leqno(S) 
\left \{
\begin{array}{l}
(\partial_t u_{j}+i \rho_j \partial_x^2u_{j})(t,x)=0,\, x\in(j,j+1),\, t\in(0,\infty),\, j = 0,...,N-1, \\
\rho_0 \, \partial_x u_0(t,0) = { \bf i u_0(t,0)},\ u_{N-1}(t,N)=0,\, t\in(0,\infty),\\
u_{j-1}(t,j)=u_{j}(t,j),\, t\in(0,\infty),\, j = 1,...,N-1,\\
- \rho_{j-1} \partial_x u_{j-1}(t,j)+ \rho_j \partial_x u_{j}(t,j)= 0,\, t\in(0,\infty),\, j = 1,...,N-1, \\
u_j(0,x)=u_j^0(x),\,   j=0,...,N-1,
\end{array}
\right.\\
\end{equation*}
 
where $\rho_j > 0, \, \forall \, j=0,...,N-1$.

We define the natural energy $E(t)$ of a solution $u = (u_0,...,u_{N-1})$ of $(S)$ by
\be 
\label{schrenergy1}
E(t)=\frac{1}{2} \ds \sum_{j=0}^{N-1}  \int_{j}^{j+1} | u_{j}(t,x)|^2 {\rm d}x,
\ee
We can easily check that every sufficiently smooth solution of $(S)$ satisfies the following dissipation law 
\begin{equation}\label{schrdissipae1}
E^\prime(t) = - \ds \bigl| u_{0}(t,0)\bigr|^2\leq 0, \, 
\end{equation}
and therefore, the energy is a nonincreasing function of the time variable $t$.

In order to study system $(S)$ we introduce the following Hilbert space

$$
\mathcal{H}= \bigg \{u=(u_0,...,u_{N-1}) \in  \ds \prod_{j=0}^{N-1} L^2(j,j+1)) \bigg \},
$$
equipped with the inner product
$$
<u,\tilde{u}>_{\mathcal{H}}=\sum_{j=0}^{N-1} \int_j^{j+1}  u_{j}(x) \, \overline{\tilde{u}_{j}(x)} dx.
$$

The system $(S)$ is a first order evolution equation which as the form 
\begin{equation} \left\{
\begin{array}{l}
u' =\mathcal{A} u,\\
u(0)=u^{0},\,\end{array}\right.\label{schrfirstorder}\end{equation}

where $u^{0}=(u_0^0,u_1^0,...,u_{N-1}^0) \in \cH $  and the operator 
$\mathcal{A} : {\cal D}({\cal A}) \rightarrow \mathcal{H}$ is defined by 
\[\mathcal{A} u:=(-i \rho_j \partial_x^2u_{j})_{0 \leq j\leq N-1},\] 
with
\begin{multline*}
{\cal D}({\cal A}):=\left\{u\in \prod_{j=0}^{N-1} H^2(j,j+1) \,:
\mbox {\textrm{satisfies }} \,(\ref {schre2}) \; \mbox{\textrm{to}} \; (\ref {schre4}) \; \mbox{\textrm{hereafter}}
\right\},\end{multline*}
\begin{equation}\label{schre2}
\rho_0 \, \partial_x u_0(0) = { \bf i u_0(0)}, u_{N-1}(N)=0,
\end{equation}
\begin{equation}\label{schre3}
u_{j-1}(j)=u_{j}(j),\, j = 1,...,N-1,\\
\end{equation}
\begin{equation}\label{schre4}
-\rho_{j-1} \partial_x u_{j-1}(j)+ \rho_j \partial_x u_{j}(j)= 0,\, j = 1,...,N-1. \\
\end{equation}

Now we can prove the well-posedness of system $(S)$ and that the solution of $(S)$ satisfies the dissipation law (\ref{schrdissipae1}).

\begin{proposition}\label{schr3exist1} 
(i) For an initial datum $u^{0}\in \mathcal{H}$, there exists a unique solution $u\in C([0,\,+\infty),\, \mathcal{H})$
to  problem (\ref{schrfirstorder}). Moreover, if $u^{0}\in \mathcal{D}(\mathcal{A})$, then
$$u\in C([0,\,+\infty),\, \mathcal{D}(\mathcal{A}))\cap C^{1}([0,\,+\infty),\, \mathcal{H}).$$

(ii) The solution $u$ of $(S)$ with initial datum in $\mathcal{D}(\mathcal{A})$ satisfies \rfb{schrdissipae1}.
Therefore the energy is decreasing.
\end{proposition}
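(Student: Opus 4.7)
My plan is to prove Proposition \ref{schr3exist1} by applying the Lumer--Phillips theorem, closely paralleling the argument given for the wave system in Proposition \ref{3exist1}. The two properties to verify are the dissipativity and the maximality of $\mathcal{A}$.

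\textbf{Dissipativity.} For $u\in\mathcal{D}(\mathcal{A})$ I would compute
\[
\langle\mathcal{A}u,u\rangle_{\mathcal{H}} = -i\sum_{j=0}^{N-1}\rho_j\int_j^{j+1}\partial_x^2 u_j\,\overline{u_j}\,dx
\]
and integrate by parts on each edge. The resulting bulk term $i\sum_j\rho_j\int|\partial_x u_j|^2\,dx$ is purely imaginary and therefore does not contribute to the real part. The endpoint contributions at $x=N$ vanish because $u_{N-1}(N)=0$, and the contributions at the interior nodes telescope to zero using the continuity \rfb{schre3} together with the Kirchhoff-type condition \rfb{schre4}. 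Only the left endpoint $x=0$ survives, where the dissipative boundary condition $\rho_0\partial_x u_0(0)=iu_0(0)$ produces $i\rho_0\partial_x u_0(0)\overline{u_0(0)} = i\cdot i\,|u_0(0)|^2 = -|u_0(0)|^2$. Hence
\[
\Re\langle\mathcal{A}u,u\rangle_{\mathcal{H}} = -|u_0(0)|^2 \leq 0.
\]

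\textbf{Maximality.} Given $f\in\mathcal{H}$ and $\lambda>0$, I would solve $(\lambda I-\mathcal{A})u=f$, i.e. $\lambda u_j+i\rho_j\partial_x^2 u_j=f_j$, via a variational formulation on the space $V$ introduced in Section \ref{wellposed}. Multiplying by $\bar\phi_j$ with $\phi\in V$, integrating by parts, and inserting the feedback condition $\rho_0\partial_x u_0(0)=iu_0(0)$ yields the sesquilinear form
\[
a(u,\phi) = \sum_{j=0}^{N-1}\int_j^{j+1}\rho_j\partial_x u_j\,\partial_x\overline{\phi_j}\,dx \; +\; i\,u_0(0)\overline{\phi_0(0)} \; +\; i\lambda\sum_{j=0}^{N-1}\int_j^{j+1}u_j\overline{\phi_j}\,dx .
\]
The key observation is that the two terms involving $i$ are purely imaginary on the diagonal, so $\Re a(u,u) = \sum_j\int\rho_j|\partial_x u_j|^2\,dx = \|u\|_V^2$. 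Because $u_{N-1}(N)=0$ for $u\in V$, a Poincar\'e inequality on the chain gives coercivity of $a$ on $V$, while continuity is immediate from Cauchy--Schwarz and the trace inequality at $x=0$. Lax--Milgram then produces a unique $u\in V$. Choosing first test functions $\phi\in\prod_j\mathcal{D}(j,j+1)$ shows $u_j\in H^2(j,j+1)$ and recovers the equation in the distributional sense; then taking general $\phi\in V$ and comparing boundary terms recovers the transmission condition \rfb{schre4} and the feedback condition \rfb{schre2}, so that $u\in\mathcal{D}(\mathcal{A})$.

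\textbf{Dissipation law and conclusion.} Part (ii) then follows by differentiating \rfb{schrenergy1} along a regular solution, using the Schr\"odinger equation on each edge and repeating the integration-by-parts computation from the dissipativity step; the calculation is entirely analogous and is left to the reader. The main obstacle, though minor, is the careful bookkeeping of signs and imaginary factors at the boundary $x=0$: getting $\Re\langle\mathcal{A}u,u\rangle\leq 0$ and ensuring that the anti-Hermitian pieces $i\,u_0(0)\overline{\phi_0(0)}+i\lambda\int u\bar\phi$ in the sesquilinear form drop out of the real part requires that the dissipative boundary term be written in exactly the right form—this is precisely where the factor $i$ in the feedback condition plays its role.
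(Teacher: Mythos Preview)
Your proof is correct. The dissipativity computation and part~(ii) coincide with the paper's argument. For maximality, however, the paper takes a different and more elementary route: rather than setting up a sesquilinear form on $V$ and invoking Lax--Milgram for $(\lambda I-\mathcal{A})u=f$ with $\lambda>0$, the authors solve $-\mathcal{A}u=f$ directly. Since $i\rho_j\,\partial_x^2 u_j=f_j$ is simply a second-order ODE with $L^2$ right-hand side, the general solution on each edge is an explicit double integral of $f_j$ plus an affine function $p_j$. The boundary and transmission conditions \rfb{schre2}--\rfb{schre4} then become a $2N\times 2N$ linear system for the coefficients of the $p_j$, and solvability reduces to showing that the homogeneous system forces all $p_j\equiv 0$; this follows from one integration by parts together with the Dirichlet condition at $x=N$. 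Your variational approach has the advantage of mirroring exactly the argument given for the wave system in Proposition~\ref{3exist1}, so the two well-posedness proofs become formally parallel; the paper's approach trades this uniformity for a short, self-contained ODE computation that avoids introducing the space $V$ and the accompanying Poincar\'e and trace inequalities.
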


\begin{proof}
(i) By Lumer-Phillips' theorem, it suffices to show that
$\mathcal{A}$ is dissipative and  maximal.

$\mathcal{A}$ is clearly dissipative. Indeed, by integration by parts and by using the transmission and boundary conditions, we have

\begin{equation}\label{schrdissipativeness}
\forall  u \in \mathcal{D}(\mathcal{A}),\; \Re\left(\left\langle\mathcal{A}u,\, u \right\rangle_{\mathcal{H}}\right)=- \left|u_{0}(0)\right|^2\leq 0.
\end{equation}

\medskip

Now, let $f\in \mathcal{H}$. We look for $u\in \mathcal{D}(\mathcal{A})$ solution of 
\begin{equation}\label{schrbijection1}
-\mathcal{A} u = f.
\end{equation} 
or equivalently
\begin{equation}
i \rho_j \partial_x^2u_{j}=f_{j} ,\;  \forall j\in\{0,...,N-1\},
\label{schrbijection2}\end{equation}
and $u$ satisfies the boundary and transmission conditions (\ref{schre2})-(\ref{schre4}). 

The general solution of (\ref{schrbijection2}) is   
$$u_j(x)=\dfrac{1}{i\rho_j} \int_{j}^{x}\left( \int_{j}^{u} f_j(s) ds \right ) du  + p_i(x),\; j=0,...,N-1,\; x\in[j,j+1],$$ 
where each  $p_j$ is a polynomial of degree 1. It remains to find $p_j,\;j=0,...,N-1$ such that the equations  (\ref{schre2})-(\ref{schre4}) are satisfied. It is equivalent to solve a linear system with $2N$ equations and $2N$ unknowns.  This system admits an unique solution if and only if the corresponding homogeneous system admits only the trivial solution. 

So we suppose that $p_j, j=0,...,N-1$ are polynomials of degree 1 and satisfy (\ref{schre2})-(\ref{schre4}). Then by integrations by parts and using (\ref{schre2})-(\ref{schre4}) we get  
$$
\ds \sum_{j=0}^{N-1} \rho_j \int_j^{j+1} |p_j'(x)|^2 dx=  -\sum_{j=0}^{N-1} \rho_j \int_j^{j+1} p_j''(x) \overline{p_j(x) }dx=0.
$$
Consequently, the polynomials $p_j$ are constant and finally vanish from the continuity conditions and the 
right Dirichlet condition. 
Hence we have proved that (\ref{schrbijection1}) admits an unique solution. Consequently $\mathcal{A}$ is maximal.   

\medskip

(ii) To prove (ii), we use the same argument as in the proof of Theorem \ref{3exist1}.   
\end{proof}

\bigskip   

%
%
 
\subsection{Exponential stability of the Schr\"{o}dinger system} \label{scrresolvent}

The stability result of system $(S)$ is given by 
\begin{theorem} \label{schlr}
There exist constants  $C>0$ and $\omega >0$ such that, for all $u^0\in {\cal H}$, the solution of system $(S)$ satisfies the following estimate
\BEQ{schrEXPDECEXP3nb}
E(t) \le C \, e^{- \omega \,t} \, \left\Vert u^0 \right\Vert_{{\cal H}}^2,
\FORALL t > 0.
\EEQ
\end{theorem}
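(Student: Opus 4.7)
The plan is to apply the Huang--Pr\"uss frequency domain criterion already used for Theorem~\ref{lr}. Since $\mathcal{A}$ generates a $C_0$-semigroup of contractions on $\cH$ and has compact resolvent (because $\mathcal{D}(\mathcal{A})\subset \prod H^2(j,j+1)$ embeds compactly into $\cH$), the exponential decay \rfb{schrEXPDECEXP3nb} is equivalent to
\be\label{schr1.89}
i\R\subset \rho(\mathcal{A})\qquad\mbox{and}\qquad \limsup_{|\beta|\to\infty}\|(i\beta-\mathcal{A})^{-1}\|_{\mathcal{L}(\cH)}<\infty.
\ee

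The first condition amounts to ruling out imaginary eigenvalues, as $\sigma(\mathcal{A})$ is purely discrete. If $\mathcal{A}u=i\beta u$ with $\beta\in\R$ and $u\in\mathcal{D}(\mathcal{A})$, then taking the real part of $\langle\mathcal{A}u,u\rangle_{\cH}$ and invoking the dissipation identity \rfb{schrdissipativeness} forces $u_0(0)=0$; the boundary condition \rfb{schre2} then yields $\partial_x u_0(0)=0$. The eigenvalue equation on $(0,1)$ reads $\rho_0\partial_x^{2}u_0+\beta u_0=0$, an ODE with zero Cauchy data at $x=0$, so $u_0\equiv 0$. Continuity and the flux transmission condition at $x=1$ transfer the vanishing Cauchy data to $u_1$, and iterating across the junctions gives $u\equiv 0$.

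For the uniform resolvent bound I reproduce, with the appropriate modifications, the first-order spatial reformulation of Lemma~\ref{lemresolvent}. Given $f\in\cH$ and $\beta\in\R$, the equation $(i\beta-\mathcal{A})u=f$ becomes $\rho_j\partial_x^{2}u_j+\beta u_j=-if_j$ on each $(j,j+1)$, subject to the usual junction conditions, to the Schr\"odinger boundary condition $\rho_0\partial_x u_0(0)=iu_0(0)$, and to $u_{N-1}(N)=0$. Setting $V_j=(u_j,\rho_j\partial_x u_j)^T$ turns this into $\partial_x V_j=M_j(\beta)V_j+g_j$ with
\[
M_j(\beta)=\left(\begin{array}{cc}0 & 1/\rho_j\\ -\beta & 0\end{array}\right),\qquad g_j=\left(\begin{array}{c}0\\ -if_j\end{array}\right),
\]
while the Schr\"odinger boundary condition becomes $(-i,1)V_0(0)=0$ and the right Dirichlet condition becomes $(1,0)V_{N-1}(N)=0$. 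Propagating $F_0:=V_0(1)$ through the matrix exponentials $e^{M_k(\beta)}$ and the junction conditions exactly as in \rfb{W0}--\rfb{Fj0} reduces the problem to a $2\times 2$ linear system $H_{N-1}(\beta)F_0=Y_{N-1}$ with $\|Y_{N-1}\|\lesssim\norm{f}{\cH}$. A uniform lower bound $|\det H_{N-1}(\beta)|\geq\gamma_{N-1}>0$ on $\R$ then yields $\|F_0\|\lesssim\norm{f}{\cH}$ and, reconstructing $u$ from the $V_j$, $\norm{u}{\cH}\lesssim\norm{f}{\cH}$.

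The main obstacle I expect is the uniform determinant bound, whose analysis splits according to the sign of $\beta$. For $\beta>0$ the eigenvalues of $M_j(\beta)$ are purely imaginary, the entries of $e^{xM_j(\beta)}$ are $\cos$/$\sin$ with argument $x\sqrt{\beta/\rho_j}$, and the inductive scheme of Lemma~\ref{lemresolvent}---introducing an auxiliary determinant $\tilde D_{N-1}$ and proving by induction on $N$ that $\Re(D_{N-1}\overline{\tilde D_{N-1}})\geq c_{N-1}>0$---carries over once the first row of $H_{N-1}$ is replaced by $(-i,1)e^{-M_0(\beta)}$ to encode the Schr\"odinger boundary condition. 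For $\beta<0$ the entries are $\cosh$/$\sinh$ with argument $x\sqrt{-\beta/\rho_j}$ and grow exponentially in $\sqrt{|\beta|}$, but so do the cofactors of $H_{N-1}(\beta)$ and the source $Y_{N-1}$; here one must extract the dominant exponential in both numerator and denominator and check that their ratio remains bounded, the factor $i$ in the boundary condition preventing cancellation of the leading branch. For $|\beta|$ in a bounded range the absence of imaginary eigenvalues together with the continuity of the resolvent yields the bound directly. Combining these regimes establishes \rfb{schr1.89} and, by Huang--Pr\"uss, \rfb{schrEXPDECEXP3nb}.
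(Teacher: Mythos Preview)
Your overall architecture (Huang--Pr\"uss, compact resolvent, eigenvalue exclusion via dissipation) matches the paper, and your treatment of Lemma~\ref{schrcondsp} is the same. The discrepancies lie entirely in how the uniform resolvent bound is obtained.

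\textbf{The case $\beta>0$.} You assert that the inductive determinant scheme of Lemma~\ref{lemresolvent} ``carries over'' once the boundary row is modified. But that scheme used crucially that the transfer matrices $e^{i\beta B_j^{-1}}$ for the wave problem have \emph{uniformly bounded} entries in $\beta$, so that $D_{N-2},\tilde D_{N-2}$ are bounded and the trace/determinant argument gives $\mu_{\min}\ge c>0$. In the Schr\"odinger setting your $e^{M_j(\beta)}$ has a $(2,1)$ entry $-\sqrt{\beta\rho_j}\sin(\sqrt{\beta/\rho_j})$ of order $\sqrt{\beta}$, so the auxiliary determinants are not bounded and the final trace bound collapses. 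The paper avoids this: it writes the solution directly, sets $F_0=c_0^1(1,i)^T$ (the Schr\"odinger boundary condition), and exploits $\det\big(\prod_j M_j\big)=1$, which forces $|\alpha_{N,1}+i\gamma_{N,1}|\,|\alpha_{N,2}+i\gamma_{N,2}|\ge 1$. Since the second factor is $O(\sqrt{\beta})$ while the relevant component $\omega_{N,1}$ of the source is $O(\|g\|/\sqrt{\beta})$, one gets $|c_0^1|\lesssim\|g\|$ by cancellation of scales --- a balancing argument, not a uniform lower bound on a determinant.

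\textbf{The case $\beta<0$.} Here your plan (extract dominant exponentials in $H_{N-1}$ and $Y_{N-1}$ and argue non-cancellation) is both vague and unnecessary. The paper does something far simpler: multiply $i\beta u_j+i\rho_j\partial_x^2u_j=g_j$ by $\overline{u_j}/i$, integrate, sum, and use the boundary/transmission conditions. The real part gives
\[
-\beta\|u\|^2+\sum_j\rho_j\|\partial_x u_j\|^2 \le \|u\|\,\|g\|,
\]
and since $-\beta>0$ this yields $\|u\|\le |\beta|^{-1}\|g\|$ directly --- no transfer-matrix machinery at all. This sign-definiteness is exactly what fails for $\beta>0$, which is why the paper's two cases are handled by completely different methods.
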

  
{\it Proof.} As in the proof of Theorem (\ref{lr}) the result is based on the following two lemmas.

\begin{lemma} \label{schrcondsp}
The spectrum of ${\cal A}$ contains no point on the imaginary axis.
\end{lemma}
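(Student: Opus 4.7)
The plan is to mirror the strategy of Lemma \ref{condsp}, adapting it to the Schr\"odinger setting. Since $\mathcal{A}$ has compact resolvent (its domain embeds compactly into $\mathcal{H}$), the spectrum consists entirely of eigenvalues, so it suffices to show that for every $\beta\in\mathbb{R}$, the equation
\be
\mathcal{A} u = i\beta\, u, \quad u\in\mathcal{D}(\mathcal{A}),
\ee
admits only the trivial solution.

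First, I would take the $\mathcal{H}$-inner product of this identity with $u$ and extract the real part. Using the dissipativity relation \rfb{schrdissipativeness}, the left-hand side contributes $-|u_0(0)|^2$ while the right-hand side $\Re(i\beta\|u\|^2_{\mathcal{H}})$ vanishes, forcing $u_0(0)=0$. Combined with the boundary condition $\rho_0\partial_x u_0(0)=i u_0(0)$ and the positivity of $\rho_0$, this also yields $\partial_x u_0(0)=0$.

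Next, I would exploit these doubled Cauchy data to propagate zero across the chain. On the first interval $(0,1)$ the eigenvalue equation reduces to the linear second-order ODE
\be
\rho_0\,\partial_x^2 u_0 + \beta\, u_0 = 0,
\ee
and by uniqueness for the associated Cauchy problem with $u_0(0)=\partial_x u_0(0)=0$, we conclude $u_0\equiv 0$ on $(0,1)$. Then the transmission conditions \rfb{schre3}--\rfb{schre4} at $x=1$ give $u_1(1)=u_0(1)=0$ and $\rho_1\partial_x u_1(1)=\rho_0\partial_x u_0(1)=0$, so that again $u_1\equiv 0$ on $(1,2)$ by ODE uniqueness. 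Iterating this argument across the successive nodes $x=2,3,\dots,N-1$, one obtains $u_j\equiv 0$ on each $(j,j+1)$, hence $u\equiv 0$.

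I do not anticipate any serious obstacle: the dissipative boundary condition at $x=0$ is strong enough that the observation $u_0(0)=0$ automatically forces $\partial_x u_0(0)=0$ as well, yielding full Cauchy data at one endpoint. The only small point to watch is the case $\beta=0$, where the ODE degenerates to $\partial_x^2 u_j=0$; the argument is unchanged since linear functions with zero Cauchy data vanish identically. This completes the plan.
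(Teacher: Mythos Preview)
Your proposal is correct and follows essentially the same approach as the paper: take the inner product, use \rfb{schrdissipativeness} to force $u_0(0)=0$, deduce $\partial_x u_0(0)=0$ from the boundary condition, and then propagate zero along the chain via ODE uniqueness and the transmission conditions. Your explicit handling of the case $\beta=0$ is a small addition not spelled out in the paper (there it is implicitly covered by the maximality proof in Proposition~\ref{schr3exist1}).
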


\begin{proof}
Since ${\cal A}$ has compact resolvent,
its spectrum $\s({\cal A})$ only consists of eigenvalues of ${\cal A}$. We will
show that the equation
\be 
{\cal A} u  = i \beta \,u
\label{schr1.9nn}
\ee
with $u = (u_0,...,u_{N-1}) \in {\cal D}({\cal A})$ and $\beta \neq 0, \beta \in \R$ has only the trivial solution.

By taking the
inner product of (\ref{schr1.9nn}) with $u  \in {\cal H}$ and using (\ref{schrdissipativeness}) we get that $u_0(0)=0$. From the left boundary condition we deduce also that $\partial_x u_0(0)=0.$ Therefore we get that $u_0=0$ since $\rho_0 \partial^2_x u_0=\beta u_0.$ Therefore by iteration we easily find $u_j=0, j=1,...,N-1.$ 

The system (\ref{schr1.9nn})  has only trivial solution.

\end{proof}

\begin{lemma}\label{schrlemresolvent}
The resolvent operator of $\mathcal{A}$ satisfies 
\be 
\limsup_{|\beta |\to \infty }  \|(i\beta -\mathcal{A})^{-1}\|_{{\cal L}(H)} <\infty, 
\label{schr1.9n} 
\ee 
\end{lemma}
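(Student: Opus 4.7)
The plan is to adapt the frequency-domain computation from Lemma \ref{lemresolvent} to the Schr\"odinger setting. Fix $\beta \in \R$ with $|\beta|$ large and let $f = (f_0,\ldots,f_{N-1}) \in \cH$. Solving $(i\beta - \cA)u = f$ amounts to finding $u \in \cD(\cA)$ such that $\rho_j \partial_x^2 u_j + \beta u_j = -i f_j$ on each edge $(j,j+1)$, together with the feedback condition $\rho_0 \partial_x u_0(0) = i u_0(0)$, the right Dirichlet condition $u_{N-1}(N) = 0$, and the usual transmission conditions at the interior nodes. Since $\cA$ has compact resolvent and no spectrum on $i\R$ by Lemma \ref{schrcondsp}, the map $\beta \mapsto \|(i\beta - \cA)^{-1}\|$ is continuous and bounded on compact sets, so it is enough to establish the uniform bound for $|\beta|$ large.

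First I would introduce the phase-space variable $W_j = (u_j,\partial_x u_j)^T$, which satisfies the first-order spatial ODE $\partial_x W_j = A_j(\beta) W_j + G_j$ with source $G_j$ depending linearly on $f_j$ and $A_j(\beta) = \left(\begin{array}{cc} 0 & 1 \\ -\beta/\rho_j & 0 \end{array}\right)$. The fundamental matrix $e^{x A_j(\beta)}$ is explicit: trigonometric in $\sqrt{\beta/\rho_j}\,x$ when $\beta > 0$ and hyperbolic in $\sqrt{|\beta|/\rho_j}\,x$ when $\beta < 0$. Setting $F_j := W_j(j)$ and propagating through the chain via the transmission conditions yields an expression $F_j = M_j(\beta) F_0 - \Gamma_j(\beta)$ exactly as in (\ref{Fj}), where the transition matrices $M_j(\beta)$ are uniformly bounded in $\beta$ and $\|\Gamma_j(\beta)\| \lesssim \|f\|_\cH$.

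Next, the feedback condition at $x=0$ and the Dirichlet condition at $x=N$ translate into two scalar equations linear in the unknown $F_0$, producing a $2\times 2$ linear system $H_{N-1}(\beta) F_0 = Y_{N-1}$ with $\|Y_{N-1}\| \lesssim \|f\|_\cH$ and $\|H_{N-1}(\beta)\|$ uniformly bounded. The crux is then to bound $|\det H_{N-1}(\beta)|$ away from zero uniformly in $\beta$. Following Lemma \ref{lemresolvent}, I would introduce a companion matrix $\tilde H_{N-1}$ obtained from $H_{N-1}$ by replacing its first row with the ``complementary'' linear form at the left endpoint, set $D_{N-1} = \det H_{N-1}$ and $\tilde D_{N-1} = \det \tilde H_{N-1}$, and prove by induction on $N$ that $\Re(D_{N-1} \overline{\tilde D_{N-1}})$ equals a strictly positive constant independent of $\beta$. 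Because the transmission-matrix recursion depends only on $e^{A_j(\beta)}$ and not on the boundary data, the induction step should be identical to the wave case; the only genuinely new ingredient is the base case $N=1$, where one must verify by direct computation that the Schr\"odinger feedback $\rho_0\partial_x u_0(0) = i u_0(0)$ (in place of the wave-type feedback) still yields a sign-definite invariant.

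Once the uniform lower bound $|\det H_{N-1}(\beta)| \geq c_{N-1} > 0$ is secured, the same matrix-eigenvalue argument as in Lemma \ref{lemresolvent} gives $\|F_0\| \lesssim \|f\|_\cH$. Propagating back through $F_j = M_j(\beta) F_0 - \Gamma_j(\beta)$ and the explicit representations analogous to (\ref{W0})--(\ref{Wj}) then yields $\|u\|_\cH \lesssim \|f\|_\cH$ with a constant independent of $\beta$, which is precisely (\ref{schr1.9n}). The hardest point will be the base-case determinant computation: the coefficient $i$ in the feedback produces cross-terms absent in the wave case, and some care is needed to extract and iterate a sign-definite invariant through the chain.
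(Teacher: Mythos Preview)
Your proposal contains a genuine gap: the assertion that the transition matrices $M_j(\beta)$ (and hence $H_{N-1}(\beta)$) are uniformly bounded in $\beta$ is false here, and the whole argument rests on it. In the wave case of Lemma \ref{lemresolvent} the spectral parameter entered as $e^{i\beta x B_j^{-1}}$ with $B_j^{-1}$ a fixed real matrix, so the exponential was genuinely oscillatory with bounded entries. In the Schr\"odinger setting your matrix is $A_j(\beta)=\left(\begin{smallmatrix}0&1\\-\beta/\rho_j&0\end{smallmatrix}\right)$, whose eigenvalues are $\pm i\sqrt{\beta/\rho_j}$ for $\beta>0$ and $\pm\sqrt{|\beta|/\rho_j}$ for $\beta<0$. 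Thus for $\beta>0$ the fundamental matrix $e^{xA_j(\beta)}$ has a $(2,1)$ entry of size $O(\sqrt\beta)$, and for $\beta<0$ all entries blow up like $e^{c\sqrt{|\beta|}}$. Neither $\|\Gamma_j(\beta)\|\lesssim\|f\|_\cH$ nor $\|H_{N-1}(\beta)\|\lesssim 1$ survives, and the induction scheme you outline cannot be transplanted verbatim.

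The paper's proof departs from your plan in two essential ways. For $\beta<0$ it abandons the transfer-matrix computation altogether and uses a one-line energy identity: multiplying $\rho_j\partial_x^2 u_j+\beta u_j=-if_j$ by $\overline{u_j}/i$, integrating, summing, and taking the real part gives $-\beta\|u\|^2\le\|u\|\|f\|$, which for $\beta<0$ immediately yields $\|u\|\lesssim\|f\|$. For $\beta>0$ the paper keeps the transfer-matrix framework but, instead of seeking uniform bounds, tracks the \emph{orders} of all entries: the product $\prod M_j$ has block orders $\left(\begin{smallmatrix}O(1)&O(1/\sqrt\beta)\\O(\sqrt\beta)&O(1)\end{smallmatrix}\right)$, the first component $\omega_{N,1}$ of the right-hand side is $O(1/\sqrt\beta)\|f\|$, and since $\det M_j=1$ one gets $|\alpha_{N,1}+i\gamma_{N,1}|^{-1}\le|\alpha_{N,2}+i\gamma_{N,2}|=O(\sqrt\beta)$. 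The $\sqrt\beta$ growth and decay cancel, yielding $|c_0^1|\lesssim\|f\|$; then propagating gives $c_j^1=O(1)\|f\|$, $c_j^2=O(\sqrt\beta)\|f\|$, and in the solution formula the $c_j^2$ term carries an extra $1/\sqrt\beta$, so $\|u\|\lesssim\|f\|$. The crucial device is thus $\det(\prod M_j)=1$ combined with order balancing, not a uniform lower bound on $|\det H_{N-1}|$ obtained by the $D_{N-1}\overline{\tilde D_{N-1}}$ induction.
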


\begin{proof}
In order to prove \rfb{schr1.9n}  we look for $u=(u_0,...,u_{N-1})\in \caD(\cA)$  solution of
\be\label{schrL2} (i \beta -  \cA ) u =g,\ee  where $\beta \in \R$ and $g=(g_0,...,g_{N-1}) \in \mathcal{H}.$

We will consider two cases since for each case the method is different. 

{\bf First case : $\beta>0$. }

{\bf First step :} Computation of the resolvent

The solution of( \ref{schrL2}) satisfies  
\be \label{schrequation} i \beta u_j +i \rho_i \partial_x^2  u_j =g_j,\; i=0,...,N-1.\ee An easy calculation shows that 
\be\label{defuj} u_j(x)=G_i(x), + c_j^1 \cos (\dfrac{\sqrt{\beta }x}{\sqrt{\rho_j}}) + c_j^2\dfrac{1}{\beta} \sin (\dfrac{\sqrt{\beta }x}{\sqrt{\rho_j}}), x\in[j,j+1],\; j=0,...,N-1,\ee
where 
\be \label{defGJ}G_j(x)=\int_j^{x} \dfrac{\sin (\dfrac{\sqrt{\beta }(x-s)}{\sqrt{\rho_j}})}{ i\sqrt{\beta} \sqrt{\rho_j}}
 g_i(s) ds,\ee
and $c_j^1,c_j^2, \in \C.$ Note that $c_j^1=u_j(j)$ and $\rho_j (\partial_x u_j)(j)=c_j^2.$

Now, let $F_j=\left(
\begin{array}{l}
c_j^1\\ 
c_j^2\\
\end{array} \right),\;
j=0,...,N-1,$ 
then using the transmission conditions (\ref{schre3})-(\ref{schre4}) we have   
$$\begin{array}{lll}
F_{j+1}&=&\left(
\begin{array}{c}
u_j(j+1)\\ 
\rho_j(\partial_x u_j)(j+1)\\
\end{array} \right)\\
&=&
\left(\begin{array}{cc}
\cos(\dfrac{\sqrt{\beta}}{\sqrt{\rho_j}}) &\dfrac{\sin(\dfrac{\sqrt{\beta}}{\sqrt{\rho_j}})}{\sqrt{\beta}\sqrt{\rho_j}}  \\ 
-\sqrt{\beta}\sqrt{\rho_j}\sin(\dfrac{\sqrt{\beta}}{\sqrt{\rho_j}}) & \cos(\dfrac{\sqrt{\beta}}{\sqrt{\rho_j}}) \\
\end{array} \right)F_j+
\left(
\begin{array}{l}
G_j(j+1)\\ 
(\partial_x G_j)(j+1)\\
\end{array} \right).
\end{array} 
$$

For simplification we introduce the matrix $M_j$ and the vector $W_j$ as 
\be\label{defMJWJ} M_j=\left(\begin{array}{cc}
\cos(\dfrac{\sqrt{\beta}}{\sqrt{\rho_j}}) &\dfrac{\sin(\dfrac{\sqrt{\beta}}{\sqrt{\rho_j}})}{\sqrt{\beta}\sqrt{\rho_j}}  \\ 
-\sqrt{\beta}\sqrt{\rho_j}\sin(\dfrac{\sqrt{\beta}}{\sqrt{\rho_j}}) & \cos(\dfrac{\sqrt{\beta}}{\sqrt{\rho_j}}) \\
\end{array} \right),\; W_j=\left(
\begin{array}{l}
G_j(j+1)\\ 
(\partial_x G_j)(j+1)\\
\end{array} \right),\ee
hence the transmission conditions are
\be \label{defFj} F_{j+1}=M_j F_j +W_j, j=0,...,N-1.\ee

It follows that 
\be\label{schrfn} F_N=(\prod_{j=N-1}^0 M_j) F_0 +\sum_{k=0}^{N-2}(\prod_{j=N-2}^{k+1} M_j)W_k+W_{N-1}.\ee 
   
From the first boundary condition  (\ref{schre2}) we have $F_0=c_0^1\left(\begin{array}{c}
1 \\ 
i 
\end{array} \right), 
$ (i.e, $c_0^2=i c_0^1)$) therefore we now compute $c_0^1$ by using the second boundary condition  (\ref{schre2}). For that we set 

\be\label{schalphagamma} \prod_{j=N-1}^0 M_j=
\left(\begin{array}{cc}
\alpha_{N,1} & \gamma_{N,1} \\ 
\alpha_{N,2} & \gamma_{N,2}
\end{array} \right),\ee
and 

\be\label{schrsecondmembre} -\sum_{k=0}^{N-2}(\prod_{j=N-2}^{k+1} M_j)W_k+W_{N-1}=
\left(\begin{array}{c}
\omega_{N,1} \\ 
\omega_{N,2}
\end{array} \right).\ee
Thus we find that 
$$c_0^1=\dfrac{\omega_{N,1}}{\alpha_{N,1}+i \gamma_{N,1}}.$$

This last identity completely determine the solution $u$ of (\ref{schrL2}). 

{\bf Second  step :} Estimate of $F_0$ for $\beta$ large. 

From one hand, since the order of each matrix $M_j$ is 
\be\label{schorderM}\left(\begin{array}{cc}
O(1) & O(\dfrac{1}{\sqrt{\beta}} )\\ 
O(\sqrt{\beta}) & O(1)
\end{array} \right)
\ee then it is easy to see that all the matrices involved in (\ref{schrsecondmembre}) have the same order. 

On the other hand, from (\ref{defGJ})  we have clearly
\be\label {schest1}  |G_j(j+1)| \lesssim \dfrac{1}{\sqrt{\beta}}\, \| g_j\|\lesssim \dfrac{1}{\sqrt{\beta}}\, \| g\|,\; j=0,....,N-1\ee
and 
\be\label {schest2}  |(\partial_x G_j)(j+1)| \lesssim  \| g_j\|\lesssim \| g\|,\; j=0,....,N-1.\ee

Therefore using the order (\ref{schorderM}) and estimate (\ref{schest1})-(\ref{schest2}) for $W_k$
in (\ref{schrsecondmembre}) we get 
\be\label{schestomega1} 
\omega_{N,1} \lesssim \dfrac{1}{\sqrt{\beta}}\, \| g\|.
\ee

Now, we remark that for all $j=0,...,N-1,\; \det M_j=1,$ which implies from (\ref{schalphagamma}) that
$$ \alpha_{N,1} \gamma_{N,2}-\alpha_{N,2} \gamma_{N,1}=1.$$
Thus 
$$|\alpha_{N,1}+i \gamma_{N,1}||\alpha_{N,2}+i \gamma_{N,2}|\geq Re[(\alpha_{N,1}+i \gamma_{N,1})(\alpha_{N,2}+i \gamma_{N,2})]=1,$$

implies  with (\ref{schorderM}) and (\ref{schalphagamma}) that
$$\dfrac{1}{ |\alpha_{N,1}+i \gamma_{N,1}|}\leq |\alpha_{N,2}+i \gamma_{N,2}|\leq O(\sqrt{\beta}).$$
The previous estimate and (\ref{schestomega1}) lead to 
\be\label{schfinal1} |c_0^1| \lesssim \|g\| \mbox{ and } |c_0^2| \lesssim \|g\| . \ee 

 {\bf Last step :} Estimate of $u.$ 
 
 First, from (\ref{defGJ}) we have
$$\|G_j\|\lesssim \|g_j\| \lesssim \|g\|, \; j=0,...,N-1.$$

Then, using (\ref{defFj}), (\ref{schorderM}) and (\ref{schfinal1}) we get by iteration the  components of $F_j,j=0,...,N-1$ satisfy :  
$$c_j^1 \lesssim \|g\| \mbox { and } c_j^1 \lesssim \sqrt{\beta} \|g\|.$$     

Consequently, using the two previous estimates in (\ref{defuj}) we directly obtain that the solution of (\ref{schrL2}) satisfies
   
\be \label{schrfinal2} \|u\| \lesssim \|g\|  \;\; ( \beta \rightarrow +\infty  ).\ee 

{\bf Second case : $\beta<0$. } 

If $\beta<0$ the previous procedure doesn't work but fortunately, in this case, 
we can get the estimate (\ref{schrfinal2}) directly. Indeed, multiplying (\ref{schrequation}) by $\dfrac{\overline{u_j}}{i},$ integrating by parts, summing from $j=0$ to $N-1$ and using the boundary-transmision conditions we have 
$$\begin{array}{lll}
 -\beta \ds \sum_{j=0}^{N-1} \int_j^{j+1}|u_j(x)|^2dx  &+&\ds\sum_{j=0}^{N-1} \rho_j \int_j^{j+1}|\partial_x u_j(x)|^2dx \\ 
&+ & i|u_0(0)|^2\\
&=&\dfrac{1}{i} \ds \sum_{j=0}^{N-1} \int_j^{j+1} g_j(x) \overline{u_j(x)} dx.
\end{array} 
$$

Therefore, 
$$ -\beta \sum_{j=0}^{N-1} \int_j^{j+1}|u_j(x)|^2dx \leq \Re[\dfrac{1}{i} \sum_{j=0}^{N-1} \int_j^{j+1} g_j(x) \overline{u_j(x)} dx ] \leq \|u\| \|g\|,  
$$
and we find 
\be \label{schrfinal3} \|u\| \lesssim \|g\|  \;\; ( \beta \rightarrow -\infty  ).\ee 
Finally the result follows from (\ref{schrfinal2})-(\ref{schrfinal3}).
\end{proof}

\end{document}